\documentclass[11pt]{book}              
\raggedright                            

\usepackage{graphicx}
\usepackage{amsmath}
\usepackage{amsfonts}
\usepackage{booktabs}

\title{\bf  Lecture Notes on \\  Numerical Smoothness and Error Analysis}    
\author{ Tong Sun}              
\date{\today}                           

\newtheorem{theorem}{Theorem}[section]
\newtheorem{lemma}[theorem]{Lemma}

\newtheorem{definition}[theorem]{Definition}

\numberwithin{equation}{section}

\def\P{\mathcal P}

\begin{document}                   	
\frontmatter                          	  
\maketitle                            	  
\tableofcontents    		  
\listoffigures                  		  
\mainmatter                           	  

\chapter{Numerical smoothness}                

This lecture notes is to introduce the idea and framework of \texttt{numerical smoothness} to the audience. The goal of the discussions is to show that numerical smoothness provides a fundamental framework to do error analysis for numerical solutions of time-dependent ordinary and partial differential equations, or evolution equations. Unlike the Lax equivalence theorem focusing on numerical stability, where one encounters many difficulties with nonlinear equations solved with complex numerical schemes, the numerical smoothness framework can naturally handle nonlinear multi-physics equations solved by complex numerical schemes. 

We are going to show that, given a well-posed time-dependent differential equation and a consistent numerical scheme, one can do the following. 

(A)  A set of smoothness indicators can be designed for the numerical solution, denoted as $S^p$ and $T^k$, where $S^p$ shows whether the numerical solution is {\it numerically smooth} in space, $T^k$ shows whether the numerical solution is {\it numerically smooth} in time. \\
(B) It can be proven that the boundedness of the smoothness indicators is a necessary condition for the numerical solution to converge to any smooth function in the desired optimal convergence rate of the scheme. If a numerical solution is not converging to any smooth function, it is certainly not converging to a smooth solution of an evolution equation.\\
(C) It can be proven that the numerical solution is within an error estimate of the form
 \begin{equation} \label{estimate}
h^{p+1} F(S^p) +  \tau^k G(S^p,T^{k})
\end{equation}
from the true solution, where $h$ is the mesh size, $\tau$ is the time step size, $p$ and $k$ are the desired convergence orders, $F$ and $G$ are computable functions of $S$ and $T$.  \\
(D) The functions $F$ and $G$ also contain the estimation on how the error is propagated. The propagation rate is how error should be propagated by the evolution equation, which is obviously at least as good as the error propagation of any convergent numerical scheme.  

Here are a few remarks on the significance of the above:

(a) If one can prove the boundedness of $S^p$ and $T^{k}$ before the computation, the error estimate is {\it a priori}.\\
(b) One can always compute the indicators $S^p$ and $T^k$, hence always has an {\it a posteriori} error estimate.\\
(c) If $S^p$ or $T^{k}$ are proven or computed to be too large, the optimal convergence rate must have been lost.\\
(d) Thus, the numerical smoothness framework has {\it a priori} error estimate, {\it a posteriori} error estimate and failure detection done in a unified theoretical analysis. 

Now the audience may have a lot of doubts or questions, such as how the smoothness indicators are defined, how the so called numerical smoothness comes into play,  or even what it means by saying that a numerical solution is numerically smooth. The answers to these questions need to start from the two alternatives ways of error splitting, one leading to numerical stability in the numerical stability framework, the other one leading to the innovative numerical smoothness concept and the numerical smoothness framework.

These lecture notes are based mostly on the author's work listed among the bibliography references. 

\section{Numerical stability vs. numerical smoothness}

The evolution equations we consider can be represented abstractly, using Banach space terminology, as in the textbook of Richtmyer and Morton \cite{morton}. We wish to find a solution of the evolution equation 
\begin{equation} \label{DE}
u_t = f(u), \qquad t \in [0,T],
\end{equation}
with the initial condition
\begin{equation} \label{IC}
u(0) = u_0,
\end{equation}
where $f$ is a nonlinear operator from the Banach space $\cal{B}$ containing $u$ to a proper range space. For more details of the abstract problem, one can go to Chapter 3 of the reference \cite{morton}, where a linear problem is presented. Here we will not specify the details about the well-posedness of our abstract problem. However, one can find more details in the concrete cases from \cite{sun}, \cite{tsEwing}, \cite{sunF}, \cite{Rumsey}, \cite{Fode} and \cite{WENO}. We choose to consider an autonomous problem in order to avoid tedious notations. 

A numerical scheme solving (\ref{DE}) can be abstractly written as
\begin{equation} \label{scheme}
u_N(t_{n+1}) = u_N(t_n) + \tau\, \Phi(u_N(t_n)), 
\end{equation}
where the index N stands for numerical soluton. The index $n$ is the step number in the discretization of the time domain $[0,T]$,  where  $\tau = T/K$ , $K$ is the total number of time steps, so the time domain is partitioned by $t_0=0$, $t_1=\tau$, $\cdots$, $t_{n+1}=t_n + \tau$, $\cdots$, $t_K=K \tau = T$. $\Phi$ is the function (rule, formulas) of computing $u_N(t_{n+1})$ from $u_N(t_n)$, which could be a Runge-Kutta scheme, a WENO scheme for a conservation law, or any single step scheme for solving (\ref{DE}). The spatial discretization are hiding in the details of the $\Phi$ for now. 

The global error of the numerical solution $u_N(t_n), n=1, \cdots, K$ is
$$
e_{n} = u(t_n)-u_N(t_n), \qquad n=1, \cdots, K.
$$
After $e_n$ has been estimated and $u_N(t_{n+1})$ has been computed, to estimate $e_{n+1}$, one needs to split it to a local error occuring in the step and a propagated error of $e_n$. There are two fundamentally different ways of error splitting, which are figured as in Splitting 1 and Splitting 2. 

\vspace{0.08in}

\setlength{\unitlength}{0.8mm}
\begin{picture}(120,40)(-10,10)
	\linethickness{0.6pt}
	\qbezier(12,23)(27,29)(42,38)
	\linethickness{0.5pt}
	\qbezier(12,23)(27,28.5)(42,34)
	\qbezier(12,17)(27,21)(42,25)
	\put(5,15){\makebox(0,0)[b]{$u_N(t_n)$}}
	\put(5,21){\makebox(0,0)[b]{$u(t_n)$}}
	\put(51.5,23){\makebox(0,0)[b]{$u_N(t_{n+1})$}}
	\put(51.5,32){\makebox(0,0)[b]{$\hat{u}_N(t_{n+1})$}}
	\put(50,37){\makebox(0,0)[b]{$u(t_{n+1})$}}
	\put(-3, 10){\vector(1,0){60}}
	\put(-2,  9){\vector(0,1){35}}
	\put(11.8,10){\line(0,1){13}}
	\put(42,10){\line(0,1){28}}
	\put(12,6){\makebox(0,0)[b]{$t_n$}}
	\put(42,6){\makebox(0,0)[b]{$t_{n+1}$}}
	\put(25,0){\makebox(0,0){Splitting 1. Numerical Stability}}

	\linethickness{0.6pt}
	\qbezier(87,23)(102,29)(117,38)
	\linethickness{0.5pt}
	\qbezier(87,17)(102,21.5)(117,29)
	\qbezier(87,17)(102,21)(117,25)
	\put(80,15){\makebox(0,0)[b]{$u_N(t_n)$}}
	\put(80,21){\makebox(0,0)[b]{$u(t_n)$}}
	\put(126.5,23){\makebox(0,0)[b]{$u_N(t_{n+1})$}}
	\put(125,28){\makebox(0,0)[b]{$\tilde{u}(t_{n+1})$}}
	\put(125,37){\makebox(0,0)[b]{$u(t_{n+1})$}}
	\put(71, 10){\vector(1,0){60}}
	\put(72,  9){\vector(0,1){35}}
	\put(86.8,10){\line(0,1){13}}
	\put(117,10){\line(0,1){28}}
	\put(87,6){\makebox(0,0)[b]{$t_n$}}
	\put(117,6){\makebox(0,0)[b]{$t_{n+1}$}}
	\put(105,0){\makebox(0,0){Splitting 2. Numerical Smoothness}}
\end{picture}

\vspace{0.55in}

Here, in Splitting 1, $\hat{u}_N(t_{n+1})$ is defined by
 \begin{equation} \label{hat}
\hat{u}_N(t_{n+1}) = u(t_n) + \tau\, \Phi(u(t_n)).
\end{equation}

In Splitting 2, $\tilde{u}(t)$ is defined in each time step $[t_n,t_{n+1}]$ by
\begin{equation} \label{tilde}
\tilde{u}_t = f(\tilde{u}), \qquad t \in [t_n,t_{n+1}],
\end{equation}
with the initial condition
\begin{equation} \label{ICtilde}
\tilde{u}(t_n) = u_N(t_n).
\end{equation}

{\bf Remark:} Both of these splittings appeared in the more specialized numerical ODE books, such as \cite{butcher}, \cite{Gear} and \cite{wanner}, but the authors did not go very far along Splitting 2. In most numerical analysis textbooks, we often see authors using Splitting 1.

{ \bf A. Numerical stability along with ideal local error}

As seen in (\ref{hat}), $\hat{u}_N(t_{n+1})$ is the non-computable numerical solution which would be obtained by using the same scheme $\Phi$, in the step $[t_n,t_{n+1}]$, with initial value $u(t_n)$.  After rewriting Splitting 1 in a formula, the total error at $t_{n+1}$ is split into a local error and a propagation error by $\hat{u}_N(t_{n+1})$ as
\begin{equation} \label{numPropa}
u(t_{n+1})-u_N(t_{n+1}) = \underbrace{u(t_{n+1})-\hat{u}_N(t_{n+1})} +\underbrace{\hat{u}_N(t_{n+1})-u_N(t_{n+1})}.
\end{equation}

In most textbooks, a typical way of defining local error is to put the real solution $u(t)$ in a scheme to find the residual $\hat{\xi}_{n+1}$, which is  equivalent to looking at $u(t_{n+1})-\hat{u}_N(t_{n+1})$.  In fact, let
\begin{equation}
u(t_{n+1}) = u(t_n) + \tau\, \Phi(u(t_n)) + \hat{\xi}_{n+1},      \label{idealLocal}
\end{equation}
where $\hat{\xi}_{n+1}$ is the residual, telling us by how much the exact solution $u(t)$ does not satisfy the scheme. Subtracting this equation (\ref{idealLocal}) by (\ref{hat}),  we get 
$$
\hat{\xi}_{n+1} = u(t_{n+1})-\hat{u}_N(t_{n+1}) =u(t_{n+1}) - u(t_n) - \tau\, \Phi(u(t_n))  .
$$
The residual $\hat{\xi}_{n+1}$ is defined to be the local error in Splitting 1. In the form $u(t_{n+1}) - u(t_n) - \tau\, \Phi(u(t_n))$ at the very right hand side of the last equation, one can use Taylor expansion or other tools to convert the local error to the form of 
\begin{equation} \label{localErrorS1}
\| \hat{\xi}^{n+1} \| \le C \tau^{k+1} \max_{t \in [t_n,t_{n+1}]} \{ \|u^{(k+1)}(t) \| \},
\end{equation}
 where $k \ge 1$ depends on the scheme $\Phi$.  

The reason of defining $u(t_{n+1})-\hat{u}_N(t_{n+1})$ to be the local error is quite obvious: the smoothness of $u(t)$ can be used only in this way to estimate the local error.

As a consequence of this convenient local error definition, one has to face the second part of the split error, namely, $\hat{u}_N(t_{n+1})-u_N(t_{n+1})$, which is clearly the numerical propagation of the error $e_n = u(t_n)-u_N(t_n)$ by the scheme. Thus, Splitting 1 leads to the numerical error propagation, also considered as numerical stability in the form of the distance of two numerical solutions. 

When people use numerical stability in proving the convergence of the scheme, they usually try to prove that, in an appropriate norm, 
\begin{equation} \label{numSta}
\| \hat{u}_N(t_{n+1})-u_N(t_{n+1}) \| \le (1+\beta \tau) \| u(t_n)-u_N(t_n) \| \le e^{\beta \tau}  \| u(t_n)-u_N(t_n) \| , 
\end{equation}
where $\beta$ can be the Lipschitz constant of $\Phi$, depending on the scheme. In most textbooks and research papers, people combine (\ref{numPropa}), (\ref{localErrorS1}), and (\ref{numSta}) to prove their main theorem on the global error estimate, which typically looks like
$$
\| u(T) - u_N(T) \| \le C \max_{t \in [0,T]}  \{ \| u^{(k+1)}(t) \| \}  e^{\beta T} \tau^{k}. 
$$

It would be ideal to have $\beta < 0$, with which one can prove a uniform convergence of the numerical solution in the entire time domain. The analysis to take advantage of $\beta<0$ is related to the one-sided Lipschitz condition and the B-stability concept, which only works for very few very simple schemes of low order. For nonlinear ODE systems, the distance between two solutions is often expected to have exponential growth for short period of time. In such cases, $\beta < 0$ is impossible. Hence more sophisticated technical research is needed.

If $\beta = 0$, there will be a simple accumulation of local error into global error. This is often related to the conservation of certain physics quantities. For example, the mass conservation of the scaler hyperbolic conservation laws resulted to the $L_1$-contraction. It is desirable in this case to look for a scheme which also possesses a $L_1$-contraction property between two numerical solutions, but all the modern schemes like discontinuous Galerkin finite element method or WENO method cannot be proven to have $L_1$-contraction. Instead, people often settle down with $\beta>0$ in their proof of numerical stability. 

For some strongly nonlinear dynamical systems, there exist short time exponential growth in certain components of the solutions of the system. In a case like this, it is impossible to expect $\beta \le 0$ in a Lipschitz condition or one-sided Lipschitz condition. For such systems, one has to deal with long term error analysis by other methods. There is very limited research on this topic.

Another thing ruining $\beta \le 0$ is the choice of the norn used in the analysis of nonlinear evolution equations. Using the scalar conservations law again as an example, we know that the distance between two solutions of the evolution equation is preserved in the $L_1$-norm, but it is to grow to the height of the shock in $L_{\infty}$-norm. While using a finite element method, people tend to choose $L_2$-norm for its convenience in the finite element formulation. However, near a location of shock formation, the $L_2$-norm of the distance between two solutions is doomed to have exponential growth. 

Unfortunately, away from some simple equations of  ODEs or parabolic PDEs solved by a few simple schemes, one often ends up getting $\beta>0$. Both nonlinearity of the evolution equation and complexity of the numerical scheme will contribute to make $\beta$ larger. In the global error estimate, one will see a factor $e^{\beta T}$, if the numerical solution is to be computed in $t \in [0, T]$. 

In many cases, proving (\ref{numSta}) is simply too hard due to nonlinearity, scheme complexity, and other technical difficulties. For this reason, in many cases, although we can produce a numerical solution which looks perfect, we just cannot prove the convergence of the scheme, even more hopeless for having a practically useful error estimate.

For linear evolution equations, there is the well-known Lax Equivalence Theorem \cite{lax}. {\it Assuming a scheme is consistent with the equation, numerical stability is necessary and sufficient for convergence.} By careful reading into the details of the original proof of the Lax Equivalence Theorem, one can recognize that the sufficiency proof was in the framework of our Splitting 1. One can also read the textbook of Richtmyer and Morton, Section 3.5 of \cite{morton}.

{\bf B. Numerical smoothness along with exact error propagation}

In Splitting 2, the error is split in a different way. $u(t)$ and $u_N(t)$ are defined as before. For the entire time domain, $\tilde{u}(t)$ is defined piecewise, (\ref{tilde}) and (\ref{ICtilde}). Namely, in each time step $(t_n,t_{n+1}]$, $\tilde{u}(t)$ is the solution of the evolution equation with initial value $u_N(t_n)$. Here the total error at $t_{n+1}$ is split as
\begin{equation} \label{numSmo}
u(t_{n+1})-u_N(t_{n+1}) = \underbrace{u(t_{n+1})-\tilde{u}(t_{n+1})} + \underbrace{\tilde{u}(t_{n+1})-u_N(t_{n+1})}. 
\end{equation}
The first difference on the right hand side (RHS) is the propagation of $e_n = u(t_n)-u_N(t_n)$ by the evolution equation.
The second difference is the actual local error, which is the error of $u_N(t_{n+1})$ while approximating $\tilde{u}(t_{n+1})$.

The easy part  of Splitting 2 is its error propagation. The error propagation by the evolution equation (exact error propagation) is exactly how error should be propagated. Even if one does not know how the solutions of the differential equation propagate error, the bottom line is that the exact error propagation is at least as good as the numerical error propagation of any convergent scheme. One famous example is the proof of the $L_1$-contraction property of scalar conservation laws, the tool used for the proof was the Lax-Friedrichs scheme.  For ODE problems, the readers will see a case study in Chapter 3, where the advantage of the error propagation by evolution equations will be demonstrated with a very different technique.  

The other part of Splitting 2 is the local error $\xi_{n+1} = \tilde{u}(t_{n+1}) - u_N(t_{n+1})$. This is where the innovative concept of numerical smoothness comes into play. For readers who are not yet familiar with the idea of numerical smoothness for error analysis, the following explanations may be important.

Among the pioneers in this field, C.W. Gear was one of very few experts who used Splitting 2 in his textbook \cite{Gear}. After all, when the solution process has obtained $(t_n,u_N(t_n))$, the natural thing to do next is to approximate $\tilde{u}(t)$ to get a $(t_{n+1},u_N(t_{n+1}))$. Gear focused on ODE problems. For non-stiff ODEs, $\tilde{u}(t)$ is typically smooth in the classical sense, hence estimating the local error $\tilde{u}(t_{n+1})-u_N(t_{n+1})$ meets no trouble. For stiff ODEs, the Lipschitz constant $L$ is large. Gear tolerated the fact that the (k+1)-st derivative $\tilde{u}^{(k+1)}(t)$ is proportional to $L^{k+1}$ in most part of the domain. Because $L$ can be considered as a fixed constant, as long as the original problem is indeed an ODE, the error bound still implies convergence. 

In fact, only when the numerical scheme send the stiff components of the numerical solution into a small part of the domain, $\tilde{u}(t)$ can be small. The solutions of the stiff ODEs do have the property of doing this. Good numerical schemes should also have this property. This property is understood as numerical smoothing or smoothness maintenance. 
Splitting 2 provides the framework in which numerical analysts can monitor if the schemes have this property, as well as to use this property in local error control. 

A trivial example might help some readers here. Consider the model problem 
$$ w' = \lambda w, \qquad w(0) = 1, \qquad \lambda = -100. $$
On the surface, the high order derivative $w^{(k)}(t) = \lambda^k w(t)$, which looks huge. Digging slightly deeper, since $w^{(k+1)}(t) = \lambda w^{(k)}$, we get $w^{(k)} (t)= e^{\lambda t} w^{(k)} (0) = e^{\lambda t} \lambda^k $, which quickly decays toward $0$ through a short transient process. In fact, if we consider the 4th derivative, $w^{(4)}(0) = 100^4 = 10^8$, $w^{(4)}(0.1) \approx 4540$, $w^{(4)}(0.2) \approx 0.2062$, $w^{(4)}(0.3) \approx 0.000009358$. Obviously, the original solution of the equation goes into the narrow region of the domain where the smoothness of the solution improves quickly. One wants to choose a numerical scheme and an appropriate step size so that the $\tilde{u}(t)$ also appears in the narrow region. A numerically stable solution does not necessarily guarantee that this will happen.  

In another way of wording, a good numerical scheme should mimic a transient process for a stiff problem, and a corresponding good error analysis needs to take advantage of this. This kind of good schemes have correct numerical smoothing property. For example, the backward Euler method mimics the smoothing process much better than the forward Euler method with large step sizes. However, if we use tiny time step sizes, both the methods will do well.

In real life, a large chunk of ODEs to be numerically solved are the semi-discretization of hyperbolic or parabolic PDEs, which are severely stiff because the Lipschitz constant $L$ is proportional to $1/h$ or $1/h^2$, where $h$ is the mesh size of the spatial discretization, and $L$ goes to $\infty$ as $h$ goes to $0$.  What seems even worse is that the semi-discrete solutions for PDEs are piecewise polynomials, cell averages, or discrete nodal points, which are not smooth functions for sure. For example, piecewise polynomials have jumps or derivative jumps along cell boundaries all over the domain, consequently $\tilde{u}(t)$ can be full of shocks and rarefaction waves. If framed by these facts, one would believe that there is no smoothness to talk about for $\tilde{u}(t)$. Perhaps for this reason, the concept of numerical smoothness as we are talking about here did not appear earlier.

While the exact error propagation is an advantage for us to take, there is a new issue: the smoothness of $\tilde{u}(t)$ needs to be established, so that the local error $\tilde{u}(t_{n+1})-u_N(t_{n+1})$ can be estimated. Since $\tilde{u}(t)$ is a solution of the evolution equation, its smoothness relies on its initial value $u_N(t_n)$. Because $u_N(t_n)$ is the result of the numerical computation in the past steps before reaching $t_n$, we need that the scheme or schemes we have used so far have had certain {\bf numerical smoothing} effect or smoothness maintenance, so that $\tilde{u}(t)$ has certain smoothness. Once we have established the smoothness of $\tilde{u}(t)$, we can obtain a local error estimate for $\tilde{u}(t_{n+1})-u_N(t_{n+1})$ by using the consistency of the schemes.

The above approach of exact error propagation and numerical smoothness analysis has been used in \cite{sun}, \cite{tsEwing}, \cite{sunF}, \cite{Fode}, \cite{Rumsey} and \cite{WENO} for long time error analysis. In these papers, some examples of nonlinear problems and complex schemes are presented. There are also details about how to apply the idea to parabolic and hyperbolic  equations with various spatial discretization. Here we summarize the idea in the following simple paradigm, to point out that numerical stability can be replaced by numerical smoothness.  

{\bf Theoretical Paradigm}:  {\it Under the assumption of consistency, 

\hspace{1.13in}  Numerical Smoothness $\Leftrightarrow$ Optimal Error Estimate. }

This paradigm also applies to nonlinear evolution equations and complex schemes.

Remarks: For non-stiff ODEs, $\tilde{u}(t)$ should be smooth naturally. For stiff ODEs, $\tilde{u}(t)$ can be not so smooth if the scheme did not handle the stiffness well, see the following example. For PDEs, $\tilde{u}(t)$ is typically nonsmooth as piecewise polynomial approximation functions, thus the local error analysis can be very tedious, but doable because the local error analysis is usually local in space and time. Thus, parabolic problems are harder than hyperbolic problems here, because hyperbolic waves are purely local.   

In many textbooks, one can find explanations about the concept of numerical stabiliy. Typically, the understanding is that a stable scheme does not amplify small local truncation errors too badly, while a unstable scheme amplify small errors too much and ruin the numerical solution. When we work with numerical smoothness instead of numerical stability, we have a totally different  understanding. We choose the framework where error propagation is only done by the evolution equation. If we do not have enough numerical smoothness, we produce a large local error at each step. If we use an anti-smoothing scheme, we produce larger and larger local error step after step. Meanwhile, if we use a scheme with certain numerical smoothing property and adaptively choose time step sizes, we can control the local error.

\section{An illustrative example: numerical smoothness}

Figure \ref{figSTSM} shows an example which explains the understanding of the role of numerical smoothness. The equation is the ODE
$$ u' = 18 (\cos t - u)$$
in the time domain $[0, 2]$ with the initial condition $u(0)=0$. This is a typical negative feedback model with the control signal $\cos t$ to be followed by the solution $u(t)$. 
The coefficient $18$ makes the problem slightly stiff. The exact solution is 
$$ u(t) = \frac{324}{325} (\cos t-e^{-18 t}) + \frac{18}{325} \sin t, $$
or, in a more informative form,
$$ u(t) = \sqrt{\frac{324}{325}} \cos( t-\alpha) - \frac{324}{325} e^{-18 t}, $$ 
where $\alpha=\arctan\frac{1}{18}$ is the angle of the phase lag, $\sqrt{\frac{324}{325}}$ shows the small amplitude shrink from the amplitude $1$ of $\cos t$. The exponential term $ e^{-18 t}$ gives us the transient process, and the coefficient  $18$ tells us the strong contraction of $u(t)$ as a moving attractor \cite{tsEwing}.  

In the picture, the grey dash-dot curve is $\cos(t)$. The blue solid curve is the exact solution satisfying $u(0)=0$. The red dot curve is the explicit Euler solution with step size $\tau=0.1$. Obviously, with $\tau=0.1$, the Euler solution is stable but has insufficient smoothing, resulting in the oscillation, which makes the red solution unacceptable. More importantly, if we consider the DE solution $\tilde{u}(t)$ satisfying $\tilde{u}(t_k) = u_k$, where $u_k$ is the nodal value of the red solution at $t_k$, this DE solution does not mimic the smoothness of the original solution $u(t)$ until $k$ gets fairly large.  The green dash curve is the backward Euler solution also with $\tau=0.1$, obviously stable, and also provides sufficient smoothing, but has large local error in the first a few steps due to the low convergence rate, which in turn hurt the smoothing effect.

\begin{figure}[htbp]
    \centering
    \includegraphics[width=0.8\textwidth]{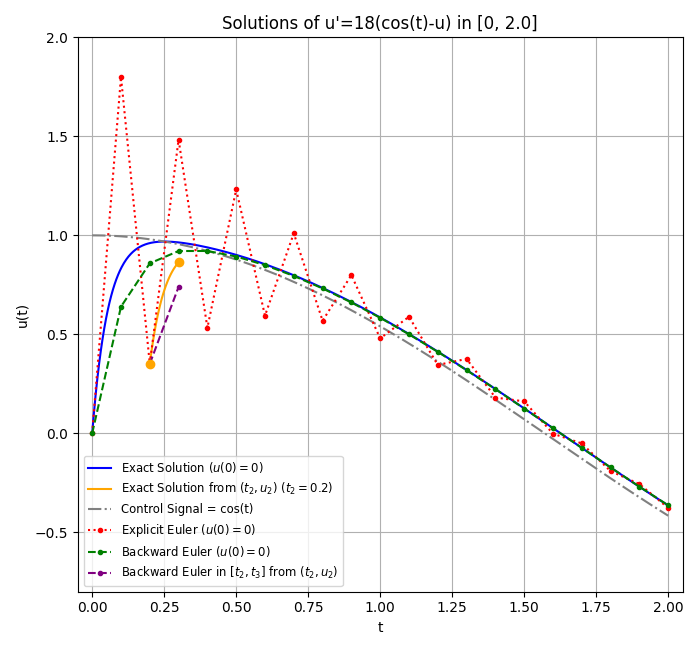} 
    \caption{Numerical Stability vs. Numerical Smoothness}
    \label{figSTSM}
\end{figure}

In the Lax framework, the behavior of the explicit Euler solution is usually said to be already stable when $\tau$ is reduced to $0.1$, but not converging yet. As for the understanding of the behavior under the numerical smoothness framework, let us focus on the step $[t_2,t_3]$. The solid orange curve is the ODE's solution $\tilde{u}(t)$ with the initial point marked by the big orange dot $(t_2, u_2)$, where $u_2$ is the solution of the explicit Euler scheme at $t_2$. In the time step $[t_2,t_3]$, the explicit Euler solution $(t_3,u_3)$ (in red) still has a huge local error from the orange curve. The backward Euler solution (in purple) also has a pretty bad local error from the orange curve, even though it is much better than the explict solution. Here, instead of blaming the stability of the explicit Euler, we blame the insufficient smoothing of the explicit Euler which made the $\tilde{u}(t)$ not as smooth as $u(t)$ in $[t_2,t_3]$. After all, the local error defined in Splitting 2 for either scheme is the distance from  the numerical solution to the orange curve. Since the previous steps of the explicit Euler have given us the orange curve $\tilde{u}(t)$ in $[t_2,t_3]$ which is not very smooth, there is no way to make the local error small unless we adaptively adjust the time step size $\tau$. Based on the above understanding, the so called numerical smoothness is about the smoothness of the $\tilde{u}(t)$ in each step.

To summarize the idea in other words, numerical smoothness is to see, for each $n$, whether a scheme delivers a $u_n=u_N(t_n)$ which makes the DE solution $\tilde{u}(t)$ smooth enough in $[t_n,t_{n+1}]$. If $\tilde{u}(t)$ is not smooth enough, the numerical solution is already ruined. If $\tilde{u}(t)$ is numerically smooth enough in $[t_n,t_{n+1}]$ as the boundedness of the smoothness indicator has told us, we just need to choose the scheme for $[t_n,t_{n+1}]$ and make sure $u_{n+1}=u_N(t_{n+1})$ will be as good as $u_n$ in the following two sense: error control and numerical smoothness. Instead of the two design goals for schemes in the Lax framework, consistency and numerical stability, our two design goals are consistency and numerical smoothness. When the smoothness indicators inform us a failure, we should either adjust the time step size or modify the scheme.

\section{Advantages of numerical smoothness over stability}

There are several advantages of using numerical smoothness over numerical stability in error analysis. The fundamental difference between numerical smoothness and numerical stability is the following. To be used in error propagation analysis, numerical stability is about the relation between two numerical solutions, one of them is always not computable, hence there is no natural stability indicator.  On the contrary, numerical smoothness is only about one numerical solution, namely the computed numerical solution, so smoothness indicators are always available. The impact of this difference is far and wide reaching.

1. Since only the computed numerical solution is involved, the loss of superposition in a nonlinear problem will not block us from computing a smoothness indicator. Hence numerical smoothness analysis works better for nonlinear problems.

2. No matter what numerical schemes are used for time-discretization, the smoothness indicator at each $t_n$ is computed in the same way based only on  the spatial discretization. Hence an error estimate can be done regardless of scheme complications in time. For ODEs, the smoothness indicator only depends on the equation, not the scheme. 

3. Error propagation is estimated by using evolution equations instead of schemes. All the potential troubles of numerical error propagation analysis are avoided. After all, error propagation by evolution equation is exactly how error should be propagated. Without complication of a numerical scheme, one can apply any contraction properties of the ODE and PDE solutions, including the attraction property of a solution, a limit cycle, a strange (chaotic) attractor, the $L^1$-contraction of the conservation laws, or any other attractive objects useful for error analysis. This is actually what makes long time error estimation possible \cite{tsEwing} \cite{sunF} \cite{Rumsey} \cite{WENO}.

4. Adaptive time-stepping is natural, since the smoothness of $\tilde{u}(t)$ is revealed by the smoothness indicator in each step $[t_n, t_{n+1}]$ \cite{sun} at the beginning of the time step. Within a step, the local error estimation is {\it a priori}.

5. The additional cost on computing the smoothness indicator is almost negligible. As shown in \cite{sun}, \cite{tsEwing}, \cite{sunF}, \cite{Rumsey}, \cite{Fode} and \cite{WENO}, computing the smoothness indicator involves a few actions of a sparse stiffness matrix and a mass matrix inversion. The stiffness matrices are computed anyway, while the mass matrix inversion can be avoided by mass lumping.  

6. When $u_N(t_n)$ has been obtained, according to our way of error analysis for the next step to be done by Splitting 2, we focus on (1) the exact error propagation and (2) the estimation of the real local error relying on the numerical smoothness indicator. One can see how much freedom we have in making the choice of the scheme(s) to be used in $[t_n,t_{n+1}]$. We can choose a different time discretization scheme for every step as long as we can control the local error as desired. We prefer to keep the spatial mesh for efficiency, but we can adjust the mesh whenever necessary as long as the reconstruction of $u_N(t_n)$ to fit the new mesh does not contribute too much (big) error toward the local error of the new time step. There is no need to worry about how any modification of the scheme could ruin our numerical stability. 

7. The smoothness indicators play the role of a two-fold sword. Checking whether the scheme(s) used before $t_n$ has maintained smoothness or delivered smoothing; and providing the current smoothness we have to use for controlling the local error.  

8. From the case study of Chapter 3, one can see that, under the numerical smoothness framework, we can even estimate error propagation with localized analysis. This is very important for nonlinear systems, especially when there is short time exponential growth in a nonlinear problem. 

With the list of all the advantages, one can see the significance of the innovative concept of numerical smoothness.

\section{Considerations on numerical stability}

The concept of numerical stability has a long history, starting from 1928 when Courant, Friedrichs and Levy proved the CFL necessity condition, which points out that the discrete domain of dependence must cover the continuous domain of dependence for finite difference schemes solving hyperbolic PDEs. This has been considered as the earliest stability condition, although it is not about the continuous reliance of a solution on initial value but about if all the information affecting the solution is approximately taking into account. 

In the 1940s, von Neumann developed the Fourier analysis method for analyzing the error of a finite difference solutions of linear parabolic and hyperbolic PDEs. This method allows people to observe what happens with the error in the frequency domain, in addition to space-time domain. It helps people identifying what is going wrong at the high frequencies, where the numerical solutions usually present symptoms of fast exponential growth first. These symptoms are called numerical instability, although the right understanding should be numerical anti-smoothing. Nevertheless, the necessary conditions of the form $\Delta t \le C h^k$ with $k = 1$ or $2$ derived from the von Neumann analysis are indeed necessary, sometimes even inadequate for the needed smoothing of parabolic equations. 

In 1956, Lax and Richtmyer establish the well-known Lax Equivalence Theorem: for a well-posed linear initial-value problem, 
$$\text{Consistency} + \text{Stability} \iff \text{Convergence}.$$ 
This has been unofficially referred as the fundamental theorem of numerical analysis, although the theorem is limited to linear time-dependent differential equations solved by linear numerical schemes. The result of the equivalence theorem has been considered as applicable in all branches of numerical analysis, although the meaning of stability is different in different branches. For example, the loss or weakening of coerciveness or inf-sup condition in an elliptic PDE problem is a well-posedness issue, very different from the issue of continuous reliance on initial value in time-dependent differential equations. Mixing all error sensitivity issues under the name of stability or instability do not help us.

Frankly speaking, the Equivalence Theorem may have done more harm then help over the years. As an equivalence statement,  the necessity is inadequate, the sufficiency is limited to linearity. Meanwhile, most people were believing that the equivalence means numerical stability of schemes can never be bypassed. A second thought won't even get started. People simply believe that a consistent scheme must be stable and only need to be stable, and the scheme is certainly responsible for error propagation. Thus, we are trapped in the problematic equivalence.

In the literature of numerical solutions of evolution equations, numerical stability has dozens of different definitions. One can divide them into two categories, (A) about a single numerical solution; and (B) about the difference between a pair of numerical solutions. In category (A), we have the Lax stability as in the Lax Equivalence Theorem, A stability, AN-stability, A($\alpha$)-stability, TVD stability, and so on. In category (B), we have algebraic stability, B-stability, and so on. 

For a linear evolution equation $u' = Au$ and a numerical scheme $u_{n+1} = C_{\tau} u_n$, Lax defined the numerical stability of the scheme by the uniform boundedness of $\| C_{\tau}^n \|$ for all $\tau \le \delta$ for a $\delta>0$ and $n \tau =T$.  This is about one numerical solution, and only involve the numerical solution itself, not involving the derivatives of the numerical solution of any order. Obviously, Lax stability belongs to category A. Some other stability definitions of category A take the spectrum of the coefficient matrix (such as A-stability) into the consideration, some other stability definitions take a bit of the solution operator's compactness into account, such as the TVD stability. If we consider the numerical smoothness indicators as a kind of stability indicator, they have the compactness of solution operators taken into account by 100\%. Only in the case of a linear equation solved by a linear scheme, the Lax stability was applied to the distance between two numerical solutions to do error propagation estimation. Hence the sufficiency of the Lax equivalence theorem was proven only for linear equations solved by linear schemes. 

Only stability in category (B) can be directly used in proving the error propagation properties of a scheme, but the error propagation properties of schemes are usually not as good the error propagation properties of differential equations. As for the stability definitions in category (A), they actually help people in obtaining numerical smoothing or smoothness maintenance properties, see \cite{sunRK}. If we review the familiar schemes, many of them have smoothing or smoothness maintenance ability while the designers of them were using numerical stability as a design criterion. 

It should be emphasized that most of the definitions of numerical stability in category (A), although help in making the schemes in smoothing or smoothness maintenance because they require the numerical solutions themselves to be bounded, they do not do enough because those definitions do not directly enforce the boundedness of high order derivatives. Please see \cite{sunRK}. 

A. Stuart and A. Humphries has a review paper \cite{Humphries} on the stability of numerical solutions and schemes. In the article they discussed six kinds of problems: linear decay problems, contractive nonlinear problems, gradient systems, dissipative systems, conservative systems, and Hamiltonian systems. Among these six kinds of problems, only for linear decay and contractive systems, the equations can be hoped to satisfy a one-sided Lipschitz condition and use a scheme with B-stability to obtain a uniform error estimate. The number of schemes having B-stability is very limited.

The vast majority of equations and the desired schemes for each of them individually are not so lucky. Most interesting equations, such as the Van der Pol equation, the Lorenz equation, many semi-linear parabolic equations, conservations laws with shocks, they are not so ideal. Only when the equation and the scheme used are both linear, stability of category (A) is also in category (B), which is why the Lax Equivalence Theorem is limited to linear cases. For example, a scheme can be proven to be dissipative stable, but one cannot use this to prove the scheme is convergent. Hence dissipative stability is merely a necessary condition for convergence. 

There are various kinds of nonlinearities which make the equation non-contractive, but dissipative in long-time scales or a significant part of the domain. Before we have sufficient knowledge about the locations of the original solution and the numerical solution, we cannot utilize these nice facts. If we use the Lipschitz or one-sided Lipschitz condition in a fairly large subdomain with a Lipschitz constant $m>0$, there is going to have excessive exponential error growth in our global error estimate. For example, if $m=5$ and $T=60$, a coefficient $e^{mT} = e^{300} \approx 10^{130}$ will easily submerge $h^p$ for an $h$ and a $p$ which give us a good numerical solution. 

In Chapter 3, we do a case study on the Van der Pol equation, where we show how the numerical smoothness framework overcome the difficulties and get a perfect global error estimate.

\chapter{Smoothness indicators}

In this section we present a simple temporal smoothness indicator for time-dependent ODE and a simple case of spatial smoothness indicator in approximation theory. 

For ODEs or ODE systems, the smoothness of a function in a time domain means two things. (1) The existence of derivatives up to a positive order $k > 1$. (2) All the derivatives up to order $k$ are bounded. Therefore the numerical smoothness for ODE solutions should be given by the smoothness of the $\tilde{u}(t)$ in each step $[t_n,t_{n+1}]$, which is to be established or disqualified in two step. (1) Find whether $\tilde{u}(t)$ is smooth at initial time $t_n$, which is given by an indicator computable from $u_N(t_n)$. (2) Prove $\tilde{u}(t)$ being smooth in the entire step $[t_n,t_{n+1}]$. 

For the spatial indicators, we stay away from differential equations for now. Instead, we will show a necessity result in approximation theory in general.  The result is that numerical smoothness is necessary for a discrete approximation function to converge to {\it any} smooth function. This necessity result has far reaching consequences as it validates a new logical argument.  That is, one can assume the numerical smoothness of a computed solution before doing error estimation, because otherwise the loss of numerical smoothness has already lost the desired convergence for us. On the other side, numerical smoothness should be taken as a criterion when a scheme is designed.  

\section{Temporal smoothness indicator}

For the ODE
$$
u' = f(u),
$$
consider  the first a few successive derivatives
$$ 
v = u', \quad w = v', \quad p = w', \quad q = p'. \label{defDeriv}
$$

Then, using the chain rule, we get
\begin{align*} 
v =u' &=f(u)  \\
w = v' &= \underline{ f'(u) v }\\
p = w' &= f''(u) v^2 + \underline{ f'(u) w }                                             \tag{B} \label{formulaDeriv}  \\
q = p' &= f'''(u) v^3 + 3 f''(u) v w + \underline{  f'(u) p } \\
q' &= f^{(4)}(u) v^4 + 6 f'''(u) v^2 w + 3 f''(u) w^2 + 4 f''(u) v p + \underline{ f'(u) q } \\
  & \cdots \cdots 
\end{align*}
It can be seen that $u, v, w, p, q, \cdots$ are used here to keep the formulas brief and the computations efficient. All these equations can be used as evaluation formulas successively to compute, at each $t_n$,
$$
 \tilde{u}'(t_n^+) = v(t_n^+),  \quad \tilde{u}''(t_n^+)=w(t_n^+), \quad \cdots, \quad \tilde{u}^{(4)}(t_n^+)=q(t_n^+).
$$

If we substituted each derivative by its form in terms of $f(u), f'(u)f(u)$, $f''(u) (f(u))^2 + (f'(u))^2 f(u),  \cdots$, the expressions would be much longer. Moreover, we want to point out that each equation in (B)  is linear in terms of its new unknown function, such as $w' = \cdots +\underline{  f'(u) w } $, where no $w$ is hiding in the dots. This observation tells us that, where $u(t)$ does not blow-up, none of $v, w, p, q, \cdots$ can blow-up. This kind of linearity will be useful in many other ways down the road. 

For any numerical scheme, at $t_n$ with $u_N(t_n)$ obtained,  we are going to define the temporal smoothness indicator in the following way. The fixed order 5 here is used to avoid tedious expressions. In practice, one can choose to go with lower or higher order. 
\begin{definition} \label{defTemporalSI}
For a numerical solution $u_N(t_n)$ of (\ref{DE}), $n=1,2,\cdots, K,$   its temporal smoothness indicator of order k is
$$
T^k = (T_0, T_1, \cdots, T_{K-1}) ,
$$
where
$$
T_n = \left( u_N(t_n),  \tilde{u}'(t_n^+),  \tilde{u}''(t_n^+), \cdots, \tilde{u}^{(k+1)}(t_n^+) \right). 
$$
The $j$-th derivative $\tilde{u}^{(j)}(t_n^+)$  in $T_n$ can be computed by the $j$-th equation in (B), with everything evaluated at $t_n$. 
\end{definition}
{\bf Remark}: In the era of MatLab, Python and AI LLM, the symbolic computation should no longer be considered as something hard to do. 

{\bf Remark}: The definiton also applies to ODE systems. For a system of $m$ unknown functions and $m$ equations, $f(u)$ is a vector of $m$ components. $f'(u)$ is an $m$ by $m$  matrix, which is the Jacobi matrix acting on vector. $f''(u)$ is a cubic matrix, or better called a 3D tenser, acting on 2 vectors consecutively. $f'''(u)$ is a 4D tensor acting on 3 vectors consecutively. $f^{(4)}(u)$ is a 5D tensor, acting on 4 vectors in a row. When $f(u)$ is sufficiently smooth, the orders of the actions can be permuted. 

This indicator gives us the key information on the smoothness of $\tilde{u}(t)$ at the initial time of each step $[t_n,t_{n+1}]$. What's still missing is the smoothness of $\tilde{u}(t)$ in the entire step. We address this issue separately for the cases of non-stiff equations and stiff equations. It is worth to point out that, with the indicator computed, the remaining theoretical analysis is a local analysis in a time step, which should be doable. 

In the case of temporal indicators,  by numerical smoothness we mean that the indicator $(T_0, T_1, \cdots, T_{K-1})$ is bounded. The necessity of the boundedness is obvious, if we want to guarantee that $u_N(t_{n+1})$ to approximate $\tilde{u}(t_{n+1})$ well. There is no need to worry about how bounded is the indicator $T^k$, because the indicator will eventually appear in an error estimate $h^{p+1} F(S^p) + \tau^k G(S^p,T^k)$, or  $\tau^k G(T^k)$ for pure ODEs. 

{\bf A. The cases of non-stiff ODEs}

For non-stiff equations, we start with observing that each equation in (\ref{formulaDeriv})  is in the pattern
$$
\frac{d}{dt}z = A(t) + f'(u) z,
$$
where \(A(t)\) consists of a sum of products of lower-order derivatives of $\tilde{u}(t)$ multiplied by a derivative  of \(f\) (in a low order) evaluated at \(\tilde{u}(t)\) for $t \in [t_n, t_{n+1}]$. Obviously, such an equation is linear in its unknown solution $z$. This observation can be summarize as a trivial theorem, if desired. For the linear equation of $z$, with initial condition 
$$
z(t_n) = \tilde{u}^{(k)}(t_n^+)
$$
and the right choice of $A(t)$, we have $z(t) = \tilde{u}^{(k)}(t)$ in $[t_n, t_{n+1}]$. 
By the method of integrating factors, the solution must be
$$
z(t) =  \tilde{u}^{(k)}(t_n^+) \exp\left(\int_{t_n}^t f'(\tilde{u}(s)) \, ds \right) +  \int_{t_n}^t A(s) \exp\left( \int_{s}^t f'(\tilde{u}(\xi)) \, d\xi \right) \, ds.
$$
When the original DE is assumed as non-stiff, $f'(\tilde{u})$ is well bounded, the exponential part in the equations must be proportional to $\max{|f'(\tilde{u})}|$, then it's easy to see that both $A(t)$ and $z(t)$ must be well bounded. Specifically, the first part of $z(t)$ is close to $\tilde{u}^{(k)}(t_n^+)$, and the second part is close to $0$. Hence the smoothness of $\tilde{u}(t)$ in $[t_n, t_{n+1}]$ is well presented by the smoothness indicator. 

{\bf B. Cases of stiff ODEs}

There are too many different cases of nonlinear stiff ODEs, so we start with considering an example of a linear stiff ODE. We are already familiar with the equation 
$$
u' = \lambda (a(t)-u), \qquad u0)=0.
$$
Earlier, we have studied the equation in the case of $\lambda = 18$ and $a(t) = cos(t)$. In the more general case here, after letting $v=u', w=v', p=w'$ and $q=p'$, we have
\begin{align*}
v' &= \lambda (a'(t)-v), \\
w' &= \lambda (a''(t)-w), \\
p' &= \lambda (a'''(t)-p) ,\\
q' &= \lambda (a^{(4)}(t)-q). 
\end{align*}
If we try to find an upper bound for $\tilde{u}^{(5)}(t) $ by straight forward computation, 
$$
\tilde{u}^{(5)} = q' = a^{(4)} \lambda - a''' \lambda^2 + a'' \lambda^3 -a' \lambda^4 + a \lambda^5 - \lambda^5 \tilde{u}
$$
evaluated for $t \in [t_n,t_{n+1}]$. For a large value of $\lambda$ in the stiff case, with the non-constant $a(t)$, it might be hard to get a good estimate of $\tilde{u}^{(5)}$. However, we haven't taken the advantage of the properties of negative feedback. In fact, we have 
$$
\tilde{u}'(t) = v(t) = \tilde{u}'(t_n^+) e^{-\lambda (t - t_n)} + \lambda \int_{t_n}^t a'(s) e^{-\lambda (t - s)} \, ds ,
$$
$$
\tilde{u}''(t) = w(t) = \tilde{u}''(t_n^+) e^{-\lambda (t - t_n)} + \lambda  \int_{t_n}^t a''(s) e^{-\lambda (t - s)} \, ds ,
$$
$$
\tilde{u}'''(t) = p(t) = \tilde{u}'''(t_n^+) e^{-\lambda (t - t_n)} + \lambda  \int_{t_n}^t a'''(s) e^{-\lambda (t - s)} \, ds ,
$$
$$
\tilde{u}^{(4)}(t) = q(t) = \tilde{u}^{(4)}(t_n^+) e^{-\lambda (t - t_n)} +\lambda  \int_{t_n}^t a^{(4)}(s) e^{-\lambda (t - s)} \, ds .
$$
Similarly, we also have
$$
\tilde{u}^{(5)}(t) = q'(t) = \tilde{u}^{(5)}(t_n^+) e^{-\lambda (t - t_n)} +\lambda  \int_{t_n}^t a^{(5)}(s) e^{-\lambda (t - s)} \, ds .
$$
All these derivatives are to be evaluated in $[t_n,t_{n+1}]$. Since 
$$
\int_{t_n}^t e^{-\lambda (t - s)} \, ds = \frac{1-e^{-\lambda (t-t_n)}}{\lambda}, 
$$
within the step we have, for $ k=1,2,3,4,5$, $|\tilde{u}^{(k)}(t)| $ bounded by the weighted average of $|\tilde{u}^{(k)}(t_n)|$ and $\max|a^{(k)}(t)|$,
$$
|\tilde{u}^{(k)}(t)| \le e^{-\lambda (t-t_n)} |\tilde{u}^{(k)}(t_n)| + (1-e^{-\lambda (t-t_n)}  ) \max_{t \in [t_n,t_{n+1}]} |a^{(k)}(t)| .
$$
For the linear stiff case, the last inequality for each $k$ tells us that the numerical smoothness in the $n$-th step only depends on the smoothness indicator $T_n$.

Now it remains to deal with the general cases of nonlinear stiff equations and systems. Since each concrete problem has its own difficulties, it is probably impossible  to deal with these problems in general. We want to emphasize here that these analyses are local analyses, so one can do things relatively easily.

\section{A spatial smoothness indicator}

As we have said, the spatial smoothness indicator is to be presented in pure approximation theory. Let $\P$ be the space of polynomials of degree $p$ or less, and $\P_h$ be
the space of piecewise polynomials of degree $p$ or less on the uniform
partition $a=x_0 < x_1 < \cdots < x_N=b$, $h=x_{i+1}-x_i=(b-a)/N$.
Let $u^R$ be a piecewise polynomial in $\P_h$.
The reason for using the superscript $R$ is to indicate that $u^R$ might be a reconstructed
numerical solution, possibly from nodal values, cell averages, or other forms of numerical
solutions. Define a smoothness indicator for any $u^R \in \P_h$ as
$$
S^p = (\mathcal{M}, \mathcal{D})
$$
with
$$
\mathcal{M} = ({M}_0, {M}_1, \cdots, {M}_{N-1}),
\qquad \text{where} \quad {M_i} = ( M^0_{i}, M^1_{i}, \cdots,
M^p_{i} ),
$$
and
$$
\mathcal{D} = ({D}_1, \cdots, {D}_{N-1}), \qquad
\text{where} \quad {D}_i = ( D^0_{i}, D^1_{i}, \cdots, D^p_{i}
).
$$
Furthermore,
$$
M_{i}^k = \frac{d^k}{dx^k} u^R(x_i^+), \qquad L_{i}^k =
\frac{d^k}{dx^k} u^R(x_i^-), \qquad J^k_i =  M_{i}^k -  L_{i}^k
$$
and, finally,
$$
D_{i}^k = J_{i}^k / h^{p+1-k}.
$$
{\bf Remark:} This definition of spatial smoothness indicator is taken from \cite{necessity}. In practice one can make some adjustments based on the mesh and the needs of the subsequent error estimations. We choose to show a simplest indicator here to ease the following statement and proof of the theorem on the necessity of numerical smoothness. Since this is a necessity result, it is sufficient to discuss it in the simplest case. Obviously, if it is a necessity in a very simple case, one expects it to be also a necessity in more complex cases. If unsure, one can try to prove a necessity result in a similar way for a complex case, see \cite{chou, chousun}.

In this article, $W^{k}_{q}$ is the standard notation of the Sobolev space of the functions, where
the $k$-th derivative is $L^q$-integrable. $H^k = W^{k}_{2}$. 

\begin{definition}
A piecewise polynomial $u^R \in \P_h$ is numerically
$W^{p+1}_{\infty}$-smooth in the  partition of cell size $h$, if there is a
constant $M_{\infty}$ such that all the components of $S^p$ are
bounded by $M_{\infty}$; $u^R$ is numerically $H^{p+1}$-smooth in
the partition if there is a constant $M_2$ such that
$\sum_{i=1}^{N-1} h \, [(D^0_i)^2 + (D^1_i)^2 + \cdots + (D^p_i)^2]
\le M_2^2$; $u^R$ is numerically $W^{p+1}_{1}$-smooth in the partition if
there is a constant $M_1$ such that $\sum_{i=1}^{N-1} h \, [|D^0_i|
+ |D^1_i| + \cdots + |D^p_i| ] \le M_1$.
\end{definition}
Remark: There is no need to worry about how large the $M_{\infty}$, $M_2$ and $M_1$ are. All components of the indicator $S^p$ will appear in the error estimates. In the practice of numerical computation, no one will make a numerical solution converge. The really important thing is to have an error estimate when the converging sequence is truncated.

\section{Necessity of numerical smoothness}

Portions of this Section expand upon the theoretical framework originally developed in T. Sun, 'Necessity of numerical smoothness,' Int. J. Inf. Sci., 1 (2012), 1–6 (arXiv:1207.3026), \cite{necessity}.

It is obvious that, for the piecewise polynomial $u^R$ to approximate a smooth function
in the interior of all the cells in the domain, one expects
$|M^k_{i}| \le \mathcal{O}(1)$ for all $k$ and $i$. It is also obvious that,
if $u^R$ is an approximation of a smooth function with the optimal
convergence rate, one can expect $|D^0_{i}| \le \mathcal{O}(1)$ for all $i$.
In the case $k=p$, it is obvious that $|D^p_i| \le \mathcal{O}(1)$ implies that
the piecewise constant function $\frac{d^p}{dx^p} u^R(x)$ has
bounded variation. In the main theorem below, we show the necessity
of the boundedness of the other components of the smoothness indicator. Let's start with a simple lemma.

\begin{lemma}
$$
Q(D^0,\cdots,D^p) =  \min_{\hat{v} \in \P} \left(
    \left\|\hat{v}+\frac{1}{2} \sum_{k=0}^{p} \frac{D^k}{k!}\xi^k \right\|^2_{L^2 (-\frac{1}{2},0)}
+ \left\|\hat{v}- \frac{1}{2} \sum_{k=0}^{p} \frac{D^k}{k!}\xi^k \right\|^2_{L^2 (0,\frac{1}{2})}  \right)
$$
is a positive definite quadratic form.
\end{lemma}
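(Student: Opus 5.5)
Write $D=(D^0,\dots,D^p)\in\mathbb{R}^{p+1}$ and $g_D(\xi)=\frac12\sum_{k=0}^p \frac{D^k}{k!}\xi^k\in\P$. The plan has three steps: show that $Q$ is a quadratic form, show that $Q(D)\ge 0$, and show that $Q(D)=0$ forces $D=0$.

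\textbf{$Q$ is a quadratic form.} Define the functional
$$
E(\hat v,D)=\|\hat v+g_D\|^2_{L^2(-\frac12,0)}+\|\hat v-g_D\|^2_{L^2(0,\frac12)} .
$$
It is a quadratic form in the pair $(\hat v,D)$. The variable $\hat v$ ranges over the finite dimensional space $\P$, and the part of $E$ that is quadratic in $\hat v$ is $\|\hat v\|^2_{L^2(-\frac12,\frac12)}$. This is positive definite on $\P$, because a nonzero polynomial cannot vanish on an interval. So for each fixed $D$ the minimum is attained at a unique $\hat v^*(D)$. This minimizer is the solution of the normal equations, a linear system with the same fixed invertible Gram matrix for every $D$ and a right-hand side linear in $D$. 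Hence $\hat v^*(D)$ depends linearly on $D$. Substituting a linear map into a quadratic form gives a quadratic form, so $Q(D)=E(\hat v^*(D),D)$ is a quadratic form in $D$. Equivalently, $Q$ is a Schur complement of the block Gram matrix.

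\textbf{Nonnegativity.} $Q(D)\ge 0$ is immediate, since $Q$ is a minimum of sums of squared norms.

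\textbf{Definiteness, the main step.} Suppose $Q(D)=0$ and let $\hat v=\hat v^*(D)$. Then $\hat v=-g_D$ almost everywhere on $(-\frac12,0)$ and $\hat v=g_D$ almost everywhere on $(0,\frac12)$. Both sides of each identity are polynomials, and a polynomial that vanishes on an interval vanishes identically. Therefore $\hat v=-g_D$ and $\hat v=g_D$ as polynomials, which gives $2g_D=0$. Since the monomials $\xi^k/k!$ are linearly independent, every $D^k=0$.

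Hence $Q(D)>0$ for $D\ne0$, and $Q$ is positive definite. I expect no step to be a real obstacle. The only points needing care are the linear dependence of the minimizer on $D$ and the identity theorem for polynomials on an interval.
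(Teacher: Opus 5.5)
Your proposal is correct and follows the paper's proof. The minimizing coefficients depend linearly on $D$, which makes $Q$ a quadratic form, and linear independence of the monomials (your identity theorem on each half-interval) forces one of the two terms to be positive when $D\neq 0$. You are more careful than the paper on the first step: you justify existence, uniqueness and linear dependence of the minimizer through the invertible Gram matrix of $\P$ in $L^2(-\frac12,\frac12)$, which the paper only asserts.
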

\noindent {\bf Proof.}  In the process of calculating the
coefficients of the polynomial
$$
\hat{v} = \sum_{k=0}^{p} V_k \xi^k
$$
to attain the minimum, each $V_k$ must be a linear combination of
$D^0,D^1,\cdots,D^p$, hence the minimum is a quadratic form
of $D^0,D^1,\cdots,D^p$. The quadratic form is positive
definite because, if any entry $D^k$ of $(D^0,D^1,\cdots,D^p)$ is non-zero, then we have either
$V_k+\frac{1}{2}\frac{D^k}{k!} \ne 0$ or $V_k-\frac{1}{2}\frac{D^k}{k!} \ne 0$.
Due to the linear independence of $\{1,\xi,\cdots,\xi^p\}$, one of
the two terms in the minimum must be positive.  \#

\begin{theorem} \label{T2}
Suppose that $u \in H^{p+1}(a,b)$, and $u^R$ is a
piecewise polynomial function described as above. Then, there is a
constant $C_2 > 0$,  independent of $h$, $u$ and $u^R$, such that
\begin{equation} \label{L2D}
\|u - u^R\|_{L^2(a,b)} \ge  h^{p+1} \left[ \sqrt{\sum_{i=1}^{N-1} h
\, Q(D^0_i,D^1_i,\cdots,D^p_i)} - C_2 |u|_{H^{p+1}(a,b)} \right].
\end{equation}
\end{theorem}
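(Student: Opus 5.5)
The plan is to localize the $L^2$ error to the patches $I_i=(x_i-\frac h2,x_i+\frac h2)$ around each interior node $x_i$, $i=1,\dots,N-1$. These patches are pairwise disjoint and contained in $(a,b)$, so
$$
\|u-u^R\|_{L^2(a,b)}^2 \ge \sum_{i=1}^{N-1}\|u-u^R\|_{L^2(I_i)}^2 .
$$
On each patch, write $u^R$ on the left half as a polynomial $L$ and on the right half as a polynomial $M$ (the restrictions of the two cell polynomials). Set $\bar v=\frac12(L+M)\in\P$. Then $u^R=\bar v-\frac12(M-L)$ on the left half and $u^R=\bar v+\frac12(M-L)$ on the right half. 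By Taylor expansion at $x_i$,
$$
M(x)-L(x)=\sum_{k=0}^p \frac{J_i^k}{k!}(x-x_i)^k .
$$

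The key step is to compare $u$ on $I_i$ with its best $L^2(I_i)$ polynomial approximation $\pi_i u\in\P$. The Bramble--Hilbert (or Taylor remainder) estimate, scaled to a patch of length $h$, gives
$$
\|u-\pi_i u\|_{L^2(I_i)}\le C h^{p+1}|u|_{H^{p+1}(I_i)} .
$$
The triangle inequality then yields
$$
\|u-u^R\|_{L^2(I_i)}\ge \|\pi_i u-u^R\|_{L^2(I_i)}-C h^{p+1}|u|_{H^{p+1}(I_i)} .
$$
Since $\pi_i u-\bar v\in\P$, the quantity $\|\pi_i u-u^R\|_{L^2(I_i)}^2$ is at least the minimum over $v\in\P$ of
$$
\Bigl\|v+\tfrac12(M-L)\Bigr\|^2_{L^2(x_i-\frac h2,x_i)}+\Bigl\|v-\tfrac12(M-L)\Bigr\|^2_{L^2(x_i,x_i+\frac h2)} .
$$
Now change variables $x=x_i+h\xi$ and write $v(x)=h^{p+1}\hat v(\xi)$. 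Because $J_i^k=h^{p+1-k}D_i^k$, we get $J_i^k (h\xi)^k=h^{p+1}D_i^k\xi^k$. The factor $h^{p+1}$ therefore comes out uniformly, and $dx=h\,d\xi$. This shows the minimum equals $h^{2p+2}\,h\,Q(D_i^0,\dots,D_i^p)$, with $Q$ exactly as in the preceding Lemma. Hence
$$
\|\pi_i u-u^R\|_{L^2(I_i)}\ge h^{p+1}\sqrt{h\,Q_i}, \qquad Q_i=Q(D_i^0,\dots,D_i^p).
$$
The sign convention for which half carries $+$ does not matter, since $\xi\mapsto-\xi$ swaps the halves.

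It remains to sum over $i$, and this is the main obstacle: the local estimates are of the form $\|\cdot\|\ge a_i-b_i$, and $(a_i-b_i)^2$ cannot simply be summed when $a_i<b_i$. I would handle this with the vector triangle inequality in $\ell^2$. Let $a=(a_i)$ with $a_i=h^{p+1}\sqrt{hQ_i}$, let $b=(b_i)$ with $b_i=Ch^{p+1}|u|_{H^{p+1}(I_i)}$, and let $e=(e_i)$ with $e_i=\|u-u^R\|_{L^2(I_i)}$. We have $e_i\ge a_i-b_i$, so $e_i\ge (a_i-b_i)_+\ge0$. Therefore
$$
\|e\|_{\ell^2}\ge\|(a-b)_+\|_{\ell^2}\ge \|a\|_{\ell^2}-\|b\|_{\ell^2},
$$
where the second inequality holds because $a_i\le (a_i-b_i)_++b_i$ componentwise and all entries are nonnegative. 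Finally, $\|b\|_{\ell^2}\le Ch^{p+1}|u|_{H^{p+1}(a,b)}$ by disjointness of the patches. Combined with the first display, this gives (\ref{L2D}) with $C_2=C$.

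The constant $C$ depends only on $p$, through the scaled Bramble--Hilbert estimate on the reference interval, so it is independent of $h$, $u$ and $u^R$ as required.
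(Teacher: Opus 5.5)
Your proof is correct and follows essentially the paper's route. You approximate $u$ by polynomials on the node-centered cells $(x_{i-\frac12},x_{i+\frac12})$, bound $\|\pi_i u-u^R\|$ below by the minimum over $\P$, and rescale to $h^{2p+3}Q(D^0_i,\dots,D^p_i)$. The only difference is the order of steps: the paper assembles these local polynomials into one piecewise polynomial $u^I\in\P_c$ on the shifted partition and applies the triangle inequality once on $(a,b)$ before localizing, which makes your $\ell^2$ summation step (correct as written) unnecessary.
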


\noindent {\bf Proof.} Let $\P_c$ be the space of piecewise
polynomials of degree $p$ or less on the partition
$a<x_{\frac{1}{2}}<x_{\frac{3}{2}}<\cdots<x_{N-\frac{1}{2}}<b$,
where $x_{i-\frac{1}{2}} = (x_{i-1}+x_i)/2$. Since $u \in
H^{p+1}(a,b)$, there is a constant $C_2 > 0$, independent of $h$ and
$u$, and a piecewise polynomial $u^I \in \P_c$, such that
\begin{equation} \label{uI}
\|u-u^I\|_{L^2 (a,b)} \le C_2 h^{p+1} |u|_{H^{p+1}(a,b)}.
\end{equation}
Now, since $\|u^I-u^R\|_{L^2 (a,b)} \le \|u^I-u\|_{L^2 (a,b)} + \|u-u^R\|_{L^2 (a,b)}$, we have
\begin{eqnarray}
&& \|u-u^R\|_{L^2 (a,b)} \ge \|u^I-u^R\|_{L^2 (a,b)} - \|u-u^I\|_{L^2 (a,b)} \nonumber \\
&& \ge \sqrt{ \sum_{i=1}^{N-1}  \|u^I-u^R\|^2_{L^2
(x_{i-\frac{1}{2}},x_{i+\frac{1}{2}})} } - C_2 h^{p+1}
|u|_{H^{p+1}(a,b)}. \label{jp}
\end{eqnarray}
Let $Q^k_i = (M^k_i + L^k_i)/2$ and $w(x) = \sum_{k=0}^p
\frac{Q^k_i}{k!} (x-x_i)^k$. Then,
$$
u^R-w = \frac{1}{2} \sum_{k=0}^p \frac{J^k_i}{k!} (x-x_i)^k
$$
for $x \in \Delta_i^+ = (x_i,x_{i+\frac{1}{2}})$, and
$$
u^R-w = - \frac{1}{2} \sum_{k=0}^p \frac{J^k_i}{k!} (x-x_i)^k
$$
for $x \in \Delta_i^- = (x_{i-\frac{1}{2}},x_i)$.
\begin{eqnarray}
&& \|u^I-u^R\|^2_{L^2 (x_{i-\frac{1}{2}},x_{i+\frac{1}{2}})} \ge \min_{v \in \P}  \|v-u^R\|^2_{L^2 (x_{i-\frac{1}{2}},x_{i+\frac{1}{2}})} \nonumber \\
&& =  \min_{v \in \P}  \|v-(u^R-w)\|^2_{L^2 (x_{i-\frac{1}{2}},x_{i+\frac{1}{2}})} \nonumber \\
&& =  \min_{v \in \P} \left( \left\|v+\frac{1}{2} \sum_{k=0}^{p}
\frac{J^k_i}{k!}(x-x_i)^k \right\|^2_{L^2(\Delta_i^-)}
                                 + \left\|v-\frac{1}{2} \sum_{k=0}^{p} \frac{J^k_i}{k!}(x-x_i)^k \right\|^2_{L^2 (\Delta_i^+)}  \right) \nonumber \\
&& =  h^{2p+3}  \min_{\hat{v} \in \P} \left(
\left\|\hat{v}+\frac{1}{2} \sum_{k=0}^{p} \frac{D^k_i}{k!}\xi^k
\right\|^2_{L^2 (-\frac{1}{2},0)}
              + \left\|\hat{v}-\frac{1}{2} \sum_{k=0}^{p} \frac{D^k_i}{k!}\xi^k \right\|^2_{L^2 (0,\frac{1}{2})}  \right) \nonumber \\
&& = h^{2p+3} \, Q(D^0_i, D^1_i,\cdots,D^p_i). \label{jpL}
\end{eqnarray}
Plugging (\ref{jpL}) into (\ref{jp}), we have proven (\ref{L2D}). \#

\begin{theorem} \label{T1}
If $u \in W^{p+1}_{1}(a,b)$, and $u^R$ is as described previously,
then, there are positive constants $C_1$ and $C_{12}^p$, independent
of $h$, $u$ and $u^R$, such that
\begin{equation} \label{L1D}
\|u - u^R\|_{L^1(a,b)} \ge  h^{p+1} \left[ C_{12}^p \sum_{1\le i \le
N-1} h \,\sqrt{ Q(D^0_i,D^1_i,\cdots,D^p_i)} - C_1
|u|_{W^{p+1}_{1}(a,b)} \right].
\end{equation}
\end{theorem}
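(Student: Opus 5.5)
We mirror the proof of Theorem \ref{T2}, replacing $L^2$ by $L^1$ and the quadratic form by a norm-equivalent quantity. Let $\P_c$ be the space of piecewise polynomials of degree $p$ or less on the staggered partition $a<x_{\frac12}<\cdots<x_{N-\frac12}<b$. Since $u\in W^{p+1}_1(a,b)$, standard local polynomial approximation (Bramble--Hilbert on each staggered cell) gives $u^I\in\P_c$ with
$$
\|u-u^I\|_{L^1(a,b)}\le C_1 h^{p+1}|u|_{W^{p+1}_1(a,b)} .
$$
By the triangle inequality, $\|u-u^R\|_{L^1}\ge \|u^I-u^R\|_{L^1}-C_1h^{p+1}|u|_{W^{p+1}_1}$. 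We then split $\|u^I-u^R\|_{L^1(a,b)}\ge\sum_{i=1}^{N-1}\|u^I-u^R\|_{L^1(x_{i-\frac12},x_{i+\frac12})}$. Unlike the $L^2$ case, no square root of a sum appears, which is why the final bound has $\sum h\sqrt{Q}$.

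On each staggered cell, $u^I$ restricted to $(x_{i-\frac12},x_{i+\frac12})$ is a single polynomial $v\in\P$. We subtract the same $w$ as before, with $Q^k_i=(M^k_i+L^k_i)/2$, so that $u^R-w=\pm\frac12\sum_k \frac{J^k_i}{k!}(x-x_i)^k$ on $\Delta_i^\pm$. The change of variables $x-x_i=h\xi$, with $J^k_i=h^{p+1-k}D^k_i$, gives
$$
\|u^I-u^R\|_{L^1(x_{i-\frac12},x_{i+\frac12})}\ge h^{p+2}\,R(D^0_i,\dots,D^p_i),
$$
where
$$
R(D)=\min_{\hat v\in\P}\Bigl(\bigl\|\hat v+\tfrac12\textstyle\sum_k\frac{D^k}{k!}\xi^k\bigr\|_{L^1(-\frac12,0)}+\bigl\|\hat v-\tfrac12\textstyle\sum_k\frac{D^k}{k!}\xi^k\bigr\|_{L^1(0,\frac12)}\Bigr).
$$
The scaling exponent is $p+1$ from the coefficients and $1$ from $dx=h\,d\xi$.

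The key step, and the main obstacle, is to show $R(D)\ge C_{12}^p\sqrt{Q(D)}$ with a constant depending only on $p$. First, $R$ is positively homogeneous of degree one, and $\sqrt{Q}$ is as well. Second, $R$ is continuous, since it is an infimum of functions jointly continuous in $(\hat v,D)$, and the minimum is attained on a bounded set of $\hat v$ by coercivity. Third, $R(D)>0$ for $D\ne 0$ by the same linear-independence argument as in the Lemma: if both $L^1$ terms vanish, then both polynomial differences vanish identically on the respective intervals, which forces $D=0$. Finally, the unit sphere of the positive definite form $Q$ is compact, so $R$ attains a positive minimum $C_{12}^p$ on it, and homogeneity gives the inequality for all $D$. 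If a cleaner route is wanted, one can instead invoke equivalence of norms on the finite-dimensional space of pairs of polynomials on $(-\frac12,0)\cup(0,\frac12)$, comparing $L^1$ and $L^2$ there, and pass to the quotient by $\P$; this yields $R(D)\ge c\sqrt{Q(D)}$ directly.

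Summing over $i$ gives $\|u^I-u^R\|_{L^1}\ge h^{p+1}C_{12}^p\sum_i h\sqrt{Q(D^0_i,\dots,D^p_i)}$. Combining this with the approximation estimate yields (\ref{L1D}).
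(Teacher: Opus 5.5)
Your proof is correct and follows essentially the same route as the paper: the staggered-mesh approximant $u^I$, the triangle inequality, restriction to the cells $(x_{i-\frac12},x_{i+\frac12})$, and a finite-dimensional comparison of $L^1$ with $L^2$ combined with scaling. The only difference is the order of the last two steps: the paper applies the inverse-type bound $\|u^I-u^R\|_{L^1}\ge C_{12}^p h^{1/2}\|u^I-u^R\|_{L^2}$ on each physical cell and then reuses (\ref{jpL}), whereas you first minimize in $L^1$ on the reference cell (your $R$) and then compare $R$ with $\sqrt{Q}$, which is the same norm-equivalence argument applied after, rather than before, the minimization.
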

\noindent {\bf Proof.} Since $u \in W^{p+1,1}(a,b)$, there is a
piecewise polynomial $u^I \in \P_c$, such that
$$
\|u-u^I\|_{L^1 (a,b)} \le  C_1 h^{p+1} |u|_{W^{p+1}_{1}(a,b)},
$$
where $C_1 >0$ is a constant independent of $h$ and $u$. Obviously,
\begin{eqnarray}
&& \|u-u^R\|_{L^1 (a,b)} \ge \|u^I-u^R\|_{L^1 (a,b)} - \|u-u^I\|_{L^1 (a,b)} \nonumber \\
&& \ge  \sum_{1 \le i \le N-1} \|u^I-u^R\|_{L^1
(x_{i-\frac{1}{2}},x_{i+\frac{1}{2}})}  - C_1 h^{p+1}
|u|_{W^{p+1}_{1}(a,b).} \label{jp1}
\end{eqnarray}
By the standard scaling argument, and (\ref{jpL}) which remains valid for the current $u^I$, it is easy to
prove that there is a constant $C_{12}^p$ such that
$$
\|u^I-u^R\|_{L^1 (x_{i-\frac{1}{2}},x_{i+\frac{1}{2}})} \ge C_{12}^p
h^{\frac{1}{2}} \|u^I-u^R\|_{L^2
(x_{i-\frac{1}{2}},x_{i+\frac{1}{2}})} \ge C_{12}^p h^{p+2}
\sqrt{Q(D^0_i,\cdots,D^p_i)}.
$$
Plugging this into (\ref{jp1}), we have (\ref{L1D}) proven. \#


\begin{theorem} \label{Tinfinity}
If $u \in W^{p+1}_{\infty}[a,b]$, and $u^R$ is as described previously, then,
there is a constant $C_\infty>0$, independent of $h$, $u$ and $u^R$,
such that
\begin{equation} \label{L8D}
\|u - u^R\|_{L^{\infty}(a,b)} \ge  h^{p+1} \left[\max_{1\le i \le
N-1} \sqrt{ Q(D^0_i,D^1_i,\cdots,D^p_i)} - C_\infty
|u|_{W^{p+1}_{\infty}[a,b]} \right].
\end{equation}
\end{theorem}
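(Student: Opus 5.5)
We follow the same pattern as the proofs of Theorem \ref{T2} and Theorem \ref{T1}, replacing the $L^2$ and $L^1$ norms by the $L^\infty$ norm.

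First, since $u \in W^{p+1}_{\infty}[a,b]$, we take $u^I \in \P_c$ to be a piecewise polynomial interpolant (or local Taylor polynomial) of degree $p$ on the staggered partition $a<x_{\frac12}<\cdots<x_{N-\frac12}<b$. The standard estimate gives a constant $C_\infty$, independent of $h$ and $u$, with
$$
\|u-u^I\|_{L^\infty(a,b)} \le C_\infty h^{p+1} |u|_{W^{p+1}_{\infty}[a,b]}.
$$
By the triangle inequality,
$$
\|u-u^R\|_{L^\infty(a,b)} \ge \max_{1\le i\le N-1}\|u^I-u^R\|_{L^\infty(x_{i-\frac12},x_{i+\frac12})} - C_\infty h^{p+1}|u|_{W^{p+1}_{\infty}[a,b]},
$$
because the $L^\infty$ norm over $(a,b)$ dominates the norm over each subinterval $(x_{i-\frac12},x_{i+\frac12})$.

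It then remains to bound each local $L^\infty$ term from below by $h^{p+1}\sqrt{Q(D^0_i,\cdots,D^p_i)}$. On the interval $I_i=(x_{i-\frac12},x_{i+\frac12})$, which has length $h$, we compare norms: for any bounded $g$,
$$
\|g\|_{L^2(I_i)} \le h^{1/2}\|g\|_{L^\infty(I_i)}.
$$
The restriction of $u^I$ to $I_i$ is a single polynomial in $\P$, so the computation (\ref{jpL}) from the proof of Theorem \ref{T2} still applies:
$$
\|u^I-u^R\|_{L^2(I_i)} \ge h^{p+3/2}\sqrt{Q(D^0_i,\cdots,D^p_i)}.
$$
Combining the two inequalities gives
$$
\|u^I-u^R\|_{L^\infty(I_i)} \ge h^{-1/2}\, h^{p+3/2}\sqrt{Q} = h^{p+1}\sqrt{Q(D^0_i,\cdots,D^p_i)},
$$
and the constant here is exactly $1$. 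This is consistent with the statement, which has no multiplicative constant in front of the max. Taking the maximum over $i$ and substituting into the previous inequality yields (\ref{L8D}).

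The only step that needs care is verifying that (\ref{jpL}) holds for the present $u^I$. The minimization over $v \in \P$ requires only that $u^I$ is a single polynomial of degree at most $p$ on each $I_i$, and the scaling $x-x_i = h\xi$ produces the factor $h^{2p+3}$ exactly. The scaling requires no additional constant, because the $L^\infty$-to-$L^2$ comparison on an interval of length $h$ is exact. Everything else is the routine interpolation estimate in $W^{p+1}_\infty$.
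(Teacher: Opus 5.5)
Your proposal is correct and follows the paper's proof step for step. You use the same interpolant $u^I \in \P_c$ on the staggered partition and the same triangle-inequality reduction to $\max_i \|u^I-u^R\|_{L^\infty(x_{i-\frac12},x_{i+\frac12})}$, and you get the local lower bound the same way, from $\|g\|_{L^\infty(I_i)} \ge h^{-1/2}\|g\|_{L^2(I_i)}$ combined with (\ref{jpL}); the paper writes that last comparison as $U_i=\sqrt{\int \frac1h U_i^2\,dx}\ge\sqrt{\frac1h\|u^I-u^R\|^2_{L^2}}$.
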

\noindent {\bf Proof.}
Since $u \in W^{p+1}_{\infty}[a,b]$, there is a piecewise polynomial $u^I \in \P_c$, such that
$$
\|u-u^I\|_{L^{\infty} (a,b)} \le  C_\infty h^{p+1}
|u|_{W^{p+1}_{\infty}[a,b]},
$$
where $C_\infty >0$ is a constant independent of $h$ and $u$.
Obviously,
\begin{eqnarray}
&& \|u-u^R\|_{L^{\infty} (a,b)} \ge \|u^I-u^R\|_{L^{\infty} (a,b)} - \|u-u^I\|_{L^{\infty} (a,b)} \nonumber \\
&& \ge  \max_{1 \le i \le N-1}
\|u^I-u^R\|_{L^{\infty}(x_{i-\frac{1}{2}},x_{i+\frac{1}{2}})}  -
C_\infty h^{p+1} |u|_{W^{p+1}_{\infty}[a,b].} \label{jp8}
\end{eqnarray}
Now, let $U_i =  \|u^I-u^R\|_{L^{\infty}(x_{i-\frac{1}{2}},x_{i+\frac{1}{2}})}$, then, by using (\ref{jpL}) for the current $u^I$,
\begin{eqnarray}
&& U_i = \sqrt{ \int_{x_{i-\frac{1}{2}}}^{x_{i+\frac{1}{2}}} \frac{1}{h} U_i^2 dx } \ge  \sqrt{ \int_{x_{i-\frac{1}{2}}}^{x_{i+\frac{1}{2}}} \frac{1}{h} (u^I-u^R)^2 dx } \nonumber \\
&& =  \sqrt{ \frac{1}{h} \|u^I-u^R\|^2_{L^2 (x_{i-\frac{1}{2}},x_{i+\frac{1}{2}})}  }
\ge h^{p+1} \sqrt{ Q(D^0_i,D^1_i,\cdots,D^p_i)}. \label{D8}
\end{eqnarray}
By plugging (\ref{D8}) into (\ref{jp8}), we have proven (\ref{L8D}).
\#

{\bf A few conclusion remarks:}

According to Theorem \ref{T2}, in order to have
$$
\|u-u^R\|_{L^2(a,b)} \le \mathcal{O}(h^{p+1})
$$
for any function $u \in H^{p+1}(a,b)$, $u^R$ must be numerically $H^{p+1}$-smooth. That is
\begin{equation} \label{Hsmoothness}
\sum_{i=1}^{N=1} h\,  Q(D^0_i,D^1_i,\cdots,D^p_i) \le \mathcal{O}(1).
\end{equation}
According to Theorem \ref{T1}, in order to have
$$
\|u-u^R\|_{L^1(a,b)} \le \mathcal{O} (h^{p+1})
$$
for any function $u \in W^{p+1}_{1}(a,b)$, $u^R$ must be numerically
$W^{p+1}_{1}$-smooth. That is
\begin{equation} \label{HHsmoothness}
\sum_{i=1}^{N=1} h\, \sqrt{ Q(D^0_i,D^1_i,\cdots,D^p_i) } \le \mathcal{O}(1).
\end{equation}
According to Theorem \ref{Tinfinity}, in order to have
$$
\|u-u^R\|_{L^{\infty}(a,b)} \le \mathcal{O}(h^{p+1})
$$
for any function $u \in W^{p+1}_{\infty}[a,b]$, $u^R$ must be numerically $W^{p+1}_{\infty}$-smooth. That is
\begin{equation} \label{Csmoothness}
Q(D^0_i,D^1_i,\cdots,D^p_i) \le \mathcal{O}(1)
\end{equation}
for all $i$. Because $Q(d_0,d_1,\cdots,d_p)$ is a positive definite quadratic form, the last inequality is equivalent to that $|D^k_i| \le \mathcal{O}(1)$ for all $k$ and $i$. This is what we mean by the necessity of the numerical smoothness. To be more explicit, $|D^k_i| \le \mathcal{O}(1)$ is equivalent to $|J^k_i| \le \mathcal{O}(h^{p+1-k})$. That is, the jumps of the $k$-th derivative of $u^R$ need to be as small as $\mathcal{O}(h^{p+1-k})$, for $k=0,1,\cdots,p$.

Although the results and the proofs of the theorems seem to be very
simple, the author believes that the impact of the theorems could be
far reaching. For a numerical solution of a time-dependent PDE, the
theorem implies that a fully-discrete scheme must enforce numerical
smoothness, otherwise the convergence of optimal rate must have been
lost. Consequently, some of the traditional numerical stability
notions are possibly inadequate, if they do not enforce numerical
smoothness. 

For the two dimensional case, S. H. Chou proved similar results, \cite{chou, chousun}. For one dimensional finite volume methods (WENO), T. Sun defined spatial smoothness indicators based on piecewise polynomial (degree 5) reconstruction from the cell-averages. The reconstruction is just like discontinuous finite elements. A lot of work can be done to cover different cases of approximation methods, but the necessity of numerical smoothness should be a valid criterion for an approximation to be close to a smooth function.

\section{Simultaneous approximation}












With the method of proving the necessity theorem of the last section, we can also prove a simultaneous approximation theorem. The well-known Bramble-Hilbert Lemma proved the existence of a simultaneous approximation of order $p$ for a smooth function having bounded $p+1$ derivative. In the theorem below it is proven that, if a polynomial $p(x)$ of degree $p$ approximates a smooth function $f(x)$ at its optimal rate, then the simultaneous approximation must be valid for all the $k$-th derivatives of $p(x)$ for $k= 1,2,\cdots, p$.  

\begin{theorem} 
 Let $f(x) \in C^{p+1}[0,h]$ for a given positive integer $p$. Suppose there is  a polynomial of degree $p$ (or less) and  a constant $C_0$, such that
$$
|f(x) - p(x)| \leq C_0 h^{p+1}
$$
for all $x \in [0,h]$. Then there must be constants $C_1, C_2, \cdots, C_p$, such that
$$
|f^{(k)}(x) - p^{(k)}(x)| \leq C_k  h^{p+1-k}
$$
for all $x \in [0,h]$ and $k=1, 2, \cdots, p$.
\end{theorem}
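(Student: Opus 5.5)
The plan is to split $f-p$ into a Taylor remainder, which is controlled by the smoothness of $f$, and a polynomial of degree at most $p$, which is controlled by an inverse (Markov-type) inequality. This mirrors how the necessity theorems worked: compare against a polynomial approximation of the smooth function, then use finite dimensionality of $\P$ together with scaling.

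\textbf{Step 1 (Taylor reduction).} Let $T(x)=\sum_{j=0}^{p} \frac{f^{(j)}(0)}{j!}x^j$ be the Taylor polynomial of $f$ at $0$, and set $M=\max_{[0,h]}|f^{(p+1)}|$. Since $f\in C^{p+1}[0,h]$, the Lagrange remainder gives, for $x\in[0,h]$ and $k=0,1,\dots,p$,
$$
|f^{(k)}(x)-T^{(k)}(x)| \le \frac{M}{(p+1-k)!}\, h^{p+1-k}.
$$
This works because $f^{(k)}-T^{(k)}$ is exactly the Taylor remainder of $f^{(k)}$ at order $p-k$.

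\textbf{Step 2 (polynomial part).} Put $q=p-T\in\P$. By the hypothesis and Step 1 with $k=0$,
$$
|q(x)|\le \left(C_0+\frac{M}{(p+1)!}\right)h^{p+1} \quad \text{on } [0,h].
$$

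\textbf{Step 3 (scaled inverse inequality).} Write $q(x)=\hat q(x/h)$ with $\hat q\in\P$ on $[0,1]$. On the finite-dimensional space $\P$ over $[0,1]$, the seminorm $\max_{[0,1]}|\hat q^{(k)}|$ is bounded by a constant $A_{p,k}$ times the norm $\max_{[0,1]}|\hat q|$, by equivalence of norms; one may take the Markov constants. Scaling back gives $q^{(k)}(x)=h^{-k}\hat q^{(k)}(x/h)$, hence
$$
\max_{[0,h]}|q^{(k)}| \le A_{p,k}\,h^{-k}\max_{[0,h]}|q| \le A_{p,k}\left(C_0+\frac{M}{(p+1)!}\right)h^{p+1-k}.
$$

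\textbf{Step 4 (combine).} By the triangle inequality, $|f^{(k)}-p^{(k)}|\le|f^{(k)}-T^{(k)}|+|q^{(k)}|$. Adding the bounds from Steps 1 and 3 gives
$$
C_k = A_{p,k}\left(C_0+\frac{M}{(p+1)!}\right)+\frac{M}{(p+1-k)!}.
$$

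The only delicate point is the meaning of \emph{constants} here. For $C_k$ to be independent of $h$, one needs $M$ to be bounded independently of $h$, for example when $f$ is a fixed $C^{p+1}$ function restricted to a shrinking interval. I will state this explicitly: $C_k$ depends only on $p$, $C_0$ and $\|f^{(p+1)}\|_{\infty}$. The key fact is that the inverse-inequality constant $A_{p,k}$ is $h$-independent, which the scaling argument guarantees. This step is the heart of the proof, but it is routine once it is reduced to $[0,1]$.
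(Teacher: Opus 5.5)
Your proof is correct, and it takes a genuinely different route from the paper's.

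\textbf{The paper's route.} The paper expands $f$ in a Taylor polynomial $T$ centred at the arbitrary evaluation point $\xi$ itself. Then $w=p-T$ satisfies $w^{(k)}(\xi)=p^{(k)}(\xi)-f^{(k)}(\xi)$ exactly, so no remainder estimate for derivatives is needed. It then works in $L^2$: it bounds $\int_0^1 z^2\,dy\le C_w h^{2p+2}$ for the rescaled $z(y)=w(hy)$. The Taylor coefficients $z^{(j)}(\eta)$ are controlled through the quadratic form $Q_\eta(a_0,\dots,a_p)=\int_0^1\bigl(\sum_j a_j (y-\eta)^j/j!\bigr)^2dy$. This form must be positive definite \emph{uniformly} in $\eta\in[0,1]$, a continuity/compactness step the paper only sketches.

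\textbf{Your route.} You fix the expansion point at $0$. You pay for this with the Lagrange remainder for $f^{(k)}-T^{(k)}$ in Step 1, which you handle correctly, since $T^{(k)}$ is the degree-$(p-k)$ Taylor polynomial of $f^{(k)}$. After that you need only a single $h$-independent sup-norm inverse (Markov) inequality on $[0,1]$. This requires no uniformity argument over a moving base point. It also gives explicit constants, e.g.\ $A_{p,k}\le\prod_{j=0}^{k-1}2(p-j)^2$ from iterating Markov's inequality on an interval of length one.

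\textbf{What each buys.} The paper's version reuses the positive-definite quadratic-form device of the necessity lemma. It also only ever uses $L^2$ smallness of $p-T$, so it would go through unchanged under an $L^2$-type hypothesis $\|f-p\|_{L^2(0,h)}\le C h^{p+3/2}$. Yours stays in the sup norm of the hypothesis and is shorter and more transparent.

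\textbf{On the constants.} You note that $C_k$ depends only on $p$, $C_0$ and $\max_{[0,h]}|f^{(p+1)}|$. This is exactly what the paper's proof delivers as well, and it is worth stating explicitly. Read literally, with $f$ and $h$ fixed, the conclusion would be trivial.
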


\noindent {\bf Proof.} For an arbitrary point $\xi \in [0,h]$, let $T(x)$ be the  Taylor expansion of degree $p$ of $f(x)$ at $\xi$,
$$
T(x) = f(\xi) + f'(\xi) (x-\xi) + \cdots + \frac{f^{(p)}(\xi)}{p!} (x-\xi)^p ,
$$
then
$$
|f(x) - T(x)| \leq M_{p+1} h^{p+1}
$$
for all $x \in [0,h]$ , where $M_{p+1} = \max_{x \in [0,h]} \frac{|f^{(p+1)}(x)|}{(p+1)!}$. Hence
$$
|P(x) - T(x)| \leq \left(C_0+M_{p+1} \right) h^{p+1}
$$
for all $x \in [0,h]$. Let
$$
w(x) = p(x) - T(x).
$$
Obviously, for $k=0,1,\cdots,p$, 
$$
w^{(k)}(\xi) = p^{(k)}(\xi) - f^{(k)}(\xi)
$$
and, for $C_w = (C_0 + M_{p+1} )^2$ and
$$
\int_0^h w^2(x) \, dx \le C_w  h^{2p+3}.
$$
Consider a change of variable given by 
$$
x = hy, \qquad \eta = h \xi, \qquad z(y) = w(hy) = w(x).
$$
Consequently,
$$
z'(y) = \frac{d}{dy} w(hy) = h w'(hy) = h w'(x), 
$$
and
$$
z^{(k)}(y) = h^k w^{(k)}(x), \qquad k=2,\cdots, p.
$$
Moreover, 
$$
\int_0^1 z^2(y) \, dy = \frac{1}{h} \int_0^h w^2(x) \, dx \le C_w h^{2p+2}.
$$
Since $w(x) = p(x)-T(x)$ is a polynomial of degree $p$, so is $z(y)$, hence 
$$  
z(y) = z(\eta) + z'(\eta) (y-\eta) + \cdots + \frac{z^{(p)}(\eta)}{p!} (y-\eta)^p .    
$$  

Consider 
$$
Q_{\eta}(a_0, a_1, \cdots, a_p) = \int_0^1 \left(a_0 + a_1 (y-\eta) + \cdots + \frac{a_p}{p!} (y-\eta)^p \right)^2 dy.
$$
This is obviously a symmetric positive definite quadratic form for each $\eta$ in $[0,1]$, since $\{ 1, y-\eta, \cdots, (y-\eta)^p \} $ is linearly independent. Moreover, since $Q_{\eta}$ depends on $\eta$ continuously, it only takes an elementary proof to show there is a positive constant $C_q$ such that 
$$
|a_0|^2 + |a_1|^2 + \cdots + |a_p|^2   \leq C_q \, Q_{\eta} (a_0, a_1,\cdots, a_p)
$$
uniformly for all $\eta$ in $[0,1]$. Consequently, replacing $a_j$ by $z^{(j)}(\eta)$ for $j = 0, 1, \cdots, p$,  we have
\begin{eqnarray*}
|z(\eta)|^2 + |z'(\eta)|^2 + \cdots + |z^{(p)}(\eta)|^2   & \leq C_q \, Q_{\eta}(z(\eta), z'(\eta),\cdots,z^{(p)}(\eta))  \\
                                                                                      & = C_q \int_0^1 z^2(y) dy \leq C_q C_w  h^{2p+2}.
\end{eqnarray*}
Finally we get 
$$
| p^{(k)}(\xi) - f^{(k)}(\xi) | = | w^{(k)}(\xi) | =  h^{-k} \, | z^{(k)}(\eta) | \leq \sqrt{C_q C_w}  h^{p+1-k}. \#
$$

A simple application of this theorem can be that, once we can guarantee the optimal approximation of a solution of a differential equation, we can be confident that we have basically eliminated the possibility of any excessive oscillations in our solution. 

The theorem only covers the 1-D case, while the proof is elementary and limited to the continuous function space $C^{p+1}_{\infty}$. For the general d-dimensional Sobolev spaces, one can use the Bramble-Hilbert Lemma and inverse inequalities to prove a general theorem in $W^{p+1}_q(\Omega)$.

\chapter{Case study: Van der Pol system}

In this chapter we present an example of solving a nonlinear ODE system, namely the system of the well known Van der Pol equation (VdP). We are going to solve it using the classical Runge-Kutta method of order 3 (RK3), compute the smoothness indicator, adaptively control the local error, and estimate the global error for long term. This will show how the nonlinearity of the Van der Pol system can be dealt with in the numerical smoothness framework. The reason for choosing RK3 is for its relatively high order and its moderately tedious local error derivation among the Runge-Kutta schemes. After all, we want to focus on the error analysis framework. 

The Van der Pol equation (VdP) is a second order nonlinear equation
\begin{equation} \label{VDP}
x'' - \mu (1-x^2) x' + x = 0, \qquad x(0)=x_0, \quad x'(0)=y_0.
\end{equation}
Here $\mu>0$ is a parameter controlling the damping and, mathematically, changing the stiffness level of the equation. 

Another important issue with the VdP is its self energization when the damping coefficient $-\mu (1-x^2)$ is negative, or $x^2<1$. While large parameter $\mu$ makes the system highly stiff, $x^2<1$ makes the error of a numerical solution growing fast exponentially along with the solution itself, although the fast growth is followed immediately by fast exponential reduction when $-\mu (1-x^2) > 0$. Both the strong stiffness and the fast exponential growth/decay put the error propagation analysis based on Lipschitz conditions in a nightmare. 

As usual, we convert the second order equation to a first order system by introducing $y=x'$. Now, we have the system 
\begin{align}
x' &= y,  \\
y' &= \mu (1-x^2) y - x,
\end{align}
with $x(0)=x_0$ and $y(0)=y_0$.

For the briefness of notations, one can use vectors 
$$
\mathbf{u} = \begin{bmatrix} x \\ y \end{bmatrix} , \qquad \mathbf{u}' = \begin{bmatrix} x' \\ y' \end{bmatrix} , \qquad\mathbf{f}(\mathbf{u}) = \begin{bmatrix} y \\ \mu(1-x^2)y-x \end{bmatrix}, \qquad \mathbf{u}_0 = \begin{bmatrix} x_0 \\y_0 \end{bmatrix}.
$$
In the vector notation, the equation becomes
$$
  \mathbf{u}' =  \mathbf{f}(\mathbf{u}), \qquad \mathbf{u}(0) = \mathbf{u}_0.
$$

We are going to solve the system with the classical Runge-Kutta scheme RK3. For an autonomous system, the scheme is 
\begin{align}
\mathbf{k}_1 &= \mathbf{f}(\mathbf{u}_n)    \nonumber \\
\mathbf{k}_2 &= \mathbf{f} (\mathbf{u}_n + 0.5 \tau \, \mathbf{k}_1 )    \nonumber \\
\mathbf{k}_3 &= \mathbf{f} (\mathbf{u}_n - \tau \,\mathbf{k}_1 + 2 \tau \, \mathbf{k}_2 )  \nonumber \\
\mathbf{u}_{n+1} &= \mathbf{u}_n + \frac{\tau}{6} \left( \mathbf{k}_1 + 4 \mathbf{k}_2 + \mathbf{k}_3 \right)   \label{RK3}
\end{align}
at $t_n$ with step size $\tau=t_{n+1}-t_n$, and $\mathbf{u}_n = \mathbf{u}_N(t_n)$ is the numerical solution. When we use adaptive time stepping, $\tau$ is allowed to change from step to step. 

We start from showing the graphics of a carefully computed solutions to point out the issues we face in the error analysis. For the eventual purpose doing the global error analysis, we will show \\
(a) How to compute the needed high order derivatives of $\mathbf{u}$ and $\mathbf{f}(\mathbf{u})$; \\
(b) How to do the local error estimation; \\ 
(c) How to deal with the nonlinearity and utilize the VdP's stability property; \\
(d) How to estimate error propagation in a time step; \\
(e) How to estimate the global error for a short time; and \\
(f) How to estimate long term global error near the limit cycle.

At this point, let's focus on Figure \ref{figVdP}. The upper right plot is a numerical solution of the VdP with $\mu = 3$, for $t \in [0, 23]$, in the phase plan (x, y). Its initial point is above the limit cycle. Its end point is on the limit cycle in the plot. After merging into the limit cycle, the solution traveled nearby the limit cycle for about 2 periods. We consider the properties of the solution orbit and the limit cycle, but only need to discuss what happens while $y>0$, because of the symmetry of the solutions in the upper half and the lower half of the phase plane. 

Since $x' = y$ and $y' = \mu (1-x^2) y -x$, we need to consider three regions of the upper half of the phase plane  ($y>0$) :

\begin{itemize}
\item For $x<-1$: Because $\mu (1-x^2)$ is negative here, $y(t)$ shall have exponential decay toward the almost horizontal curve $\mu (1-x^2) y - x = 0$, especially when $\mu$ is large, the contribution of the term $-x$ is almost negligible. Meanwhile, $x' = y$ should be or become close to $ 0$, leaving the solution creeping slowly to the right along the attracting part of the limit cycle. The major challenge in this region is the stiffness when we have a large $\mu$, but the extremely small $(x',y')$ along the limit cycle may help us reducing the harm of the stiffness. 
\item For $-1<x<1$: The major trouble here is the positive $\mu (1-x^2) y$ in the equation about $y$. This causes positive exponential growth of $y$, which in turn makes $x$ also growing exponentially. Although the plot of the limit cycle in the phase plane looks very smooth in this region, the fast moving $(x,y)$ and the fast growing $(x', y')$ make both the local error and the error propagation rate becoming larger exponentially. Fortunately, the fast motion of the solution makes the solution out of this region in a relatively short time.
\item For $1<x$: Now $\mu (1-x^2) y$ becomes negative again while $y$ starts almost at its maximum. The motion of $(x,y)$ slows down as an exponential decay, but the motion speed becomes close to $0$ when $y$ is near $0$. The concern of numerical overshooting at the sharp turn should not be as bad as the sharp turn suggesting to us. 
\end{itemize}
Overall, the biggest problem we are facing here is the exponential growth/decay and the damage done to the traditional error propagation analysis. The biggest help is that the system has lasting deep attraction to the part of the limit cycle where $x<-1, y>0$.

\section{Computing high order derivatives }

For a system $\mathbf{u}' = \mathbf{f}(\mathbf{u})$ with $\mathbf{u} \in \mathbb{R}^n$ and $\mathbf{f}: \mathbb{R}^n \to \mathbb{R}^n$, define
$$
\mathbf{v} = \mathbf{u}',\quad \mathbf{w} = \mathbf{v}' = \mathbf{u}'',\quad \mathbf{p} = \mathbf{w}' = \mathbf{u}''',
$$
$$
 \mathbf{q} = \mathbf{p}' = \mathbf{u}^{(4)}, \quad \mathbf{r} = \mathbf{q}' = \mathbf{u}^{(5)}.
$$

Below are the first through fifth derivatives of $\mathbf{u}$ with respect to $t$ for the system $\mathbf{u}' = \mathbf{f}(\mathbf{u})$, written in tensor form.
Here $\mathbf{u}^{(k)}$ denotes the k-th derivative of $\mathbf{u}$ with respect to $t$, and $\mathbf{f}^{(k)}$ is the k-th derivative tensor of $\mathbf{f}$ with respect to $\mathbf{u}$. 
\begin{align*}
\mathbf{u}^{(1)} &= \mathbf{v} = \mathbf{f}(\mathbf{u}), \\[4pt]
\mathbf{u}^{(2)} &= \mathbf{w} = \mathbf{v}' = \mathbf{f}'(\mathbf{u})\,\mathbf{v}, \\[4pt]
\mathbf{u}^{(3)} &= \mathbf{p} = \mathbf{w}' = \mathbf{f}''(\mathbf{u})(\mathbf{v}, \mathbf{v}) + \mathbf{f}'(\mathbf{u})\,\mathbf{w}, \\[4pt]
\mathbf{u}^{(4)} &= \mathbf{q} = \mathbf{p}' = \mathbf{f}'''(\mathbf{u})(\mathbf{v}, \mathbf{v}, \mathbf{v}) + 3\,\mathbf{f}''(\mathbf{u})(\mathbf{v}, \mathbf{w}) + \mathbf{f}'(\mathbf{u})\,\mathbf{p}, 
\end{align*}
\begin{align*}
\mathbf{u}^{(5)}  = \mathbf{r} =\mathbf{q}' = &= \mathbf{f}^{(4)}(\mathbf{u})(\mathbf{v}, \mathbf{v} , \mathbf{v}, \mathbf{v}) \\
&+ 6\,\mathbf{f}'''(\mathbf{u})(\mathbf{v}, \mathbf{v}, \mathbf{w}) \\
&+ 3\,\mathbf{f}''(\mathbf{u})(\mathbf{w}, \mathbf{w}) \\
&+ 4\,\mathbf{f}''(\mathbf{u})(\mathbf{v}, \mathbf{p}) \\
&+ \mathbf{f}'(\mathbf{u})\,\mathbf{q},
\end{align*}
and, for those readers wanting to see the trend, 
\begin{align*} 
\mathbf{u}^{(6)} = \mathbf{r}' &= \;\mathbf{f}^{(5)}(\mathbf{u})\bigl(\mathbf{v}, \mathbf{v}, \mathbf{v}, \mathbf{v} , \mathbf{v} \bigr) \\
&+ 10\;\mathbf{f}^{(4)}(\mathbf{u})\bigl(\mathbf{v}, \mathbf{v}, \mathbf{v}, \mathbf{w}\bigr) \\
&+ 15\;\mathbf{f}'''(\mathbf{u})\bigl(\mathbf{v}, \mathbf{w}, \mathbf{w}\bigr) \\
&+ 10\;\mathbf{f}'''(\mathbf{u})\bigl(\mathbf{v}, \mathbf{v}, \mathbf{p}\bigr) \\
&+ 10\;\mathbf{f}''(\mathbf{u})\bigl(\mathbf{w}, \mathbf{p}\bigr) \\
&+ 5\;\mathbf{f}''(\mathbf{u})\bigl(\mathbf{v}, \mathbf{q}\bigr) \\
&+ \mathbf{f}'(\mathbf{u})\,\mathbf{r},
\end{align*}

where $\bf{f}^{(k)}$ denotes the k-th derivative tensor (a symmetric (k,1)-tensor).   In explicit component form, for indices $i,j,\cdots,k$, the contractions are interpreted as
$$
\mathbf{f}^{(k)}(\mathbf{u})(\mathbf{a}_1,\dots,\mathbf{a}_k)_i = \sum_{j_1,\dots,j_k} \frac{\partial^k f_i}{\partial u_{j_1}\cdots\partial u_{j_k}}(\mathbf{u})\, a_{1,j_1}\cdots a_{k,j_k}.
$$
To get familiar with tensors and their operations, one can learn from AI-LLM, or find a textbook. 

It is worth to notice that $\mathbf{f}'(\mathbf{u})$ is the well-known Jacobian matrix of $\mathbf{f}(\mathbf{u})$, which has its usefulness in many ways.

Assume $\mathbf{f}$ to be smooth. Obviously, the boundedness of all the derivatives of $\mathbf{f}(\mathbf{u})$ follows the boundedness of $\mathbf{u}$.  

It is useful for the local error derivation of RK3 to expand $\mathbf{u}^{(3)}$ and $\mathbf{u}^{(4)}$ further by substituting 
$$
\mathbf{v} = \mathbf{f}(\mathbf{u}) = \mathbf{f} , \qquad \mathbf{w}=\mathbf{f}'(\mathbf{u}) \mathbf{f}(\mathbf{u}) = \mathbf{f}' \, \mathbf{f}  
$$
to get
$$
\mathbf{u}^{(3)} = \mathbf{p} = \mathbf{f}'' \, (\mathbf{f},\mathbf{f}) + \mathbf{f}' \mathbf{f}' \mathbf{f}
$$
and
$$
\mathbf{u}^{(4)} = \mathbf{f}''' \, (\mathbf{f}, \mathbf{f}, \mathbf{f}) + 3 \mathbf{f}'' \, (\mathbf{f}, \mathbf{f}' \,\mathbf{f} ) + \mathbf{f}' \mathbf{f}'' (\mathbf{f}, \mathbf{f}) + \mathbf{f}' \mathbf{f}' \mathbf{f}' \mathbf{f}.  
$$
In the derivation of the the local error of RK3, we will need the expansion terms in $\mathbf{u}^{(4)}$. For higher order Runge-Kutta schemes, the number of expansion terms will increase extremely fast. In the case of RK4, we will need the expansion terms of $\mathbf{u}^{(5)}$, but there are 9 of them. See \cite{wanner}.

Another usefulness of the formulas is that we can prove the boundedness of the high order derivatives. It is worth to mention that each $\mathbf{u}^{(k)}$ is the solution of a first order linear ODE. This observation is very important for the validation of the boundedness of the high order derivatives in the time step $[t_n, t_{n+1}]$ from the computed smoothness indicators at $t_n$.

\section{Local error analysis}

To derive the local truncation error of the classical third-order Runge-Kutta (RK3) scheme for an autonomous system, we expand both the true solution $\mathbf{\tilde{u}}(t_{n+1})$ and the numerical solution $\mathbf{u}_N(t_{n+1})$ at $t_n$ using Taylor series up to $\mathcal{O}(h^4)$, then find the difference. 

\vspace{0.1cm}

\noindent {\bf 1. Taylor Expansion of the $\mathbf{\tilde{u}}(t)$ }

For the true solution $\mathbf{\tilde{u}}(t)$ at $t_n$, by the Taylor series, with $\tau=t_{n+1}-t_n$:
$$
\mathbf{\tilde{u}}(t_{n+1}) = \mathbf{\tilde{u}}(t_n) + \tau \mathbf{\tilde{u}'}(t_n) + \frac{\tau ^2}{2} \mathbf{\tilde{u}''}(t_n) +\frac{\tau ^3}{6} \mathbf{\tilde{u}^{(3)}}(t_n)
                            + \frac{\tau ^4}{24} \mathbf{\tilde{u}^{(4)}}(t_n) + \mathcal{O} (\tau ^5). 
$$

For the autonomous system $\mathbf{u}' = \mathbf{f}(\mathbf{u})$, we compute successive total derivatives with respect to time $t$ using the chain rule. All functions and their derivatives are evaluated at $\mathbf{u}_n = \mathbf{u}_N(t_n)$:
\begin{itemize}
    \item \textbf{First derivative:} $\mathbf{\tilde{u}}' = \mathbf{f}$
    \item \textbf{Second derivative:} $\mathbf{\tilde{u}}'' = \mathbf{f}'\mathbf{f}$
    \item \textbf{Third derivative:} $\mathbf{\tilde{u}}''' = \mathbf{f}''(\mathbf{f}, \mathbf{f}) + \mathbf{f}' \mathbf{f}' \mathbf{f}$
    \item \textbf{Fourth derivative:} $\mathbf{\tilde{u}}^{(4)} = \mathbf{f}'''(\mathbf{f},\mathbf{f},\mathbf{f}) + 3\mathbf{f}''(\mathbf{f}'\mathbf{f}, \mathbf{f}) + \mathbf{f}'\mathbf{f}''(\mathbf{f},\mathbf{f}) + \mathbf{f}' \mathbf{f'} \mathbf{f}' \mathbf{f}$
\end{itemize}

Substituting these into the Taylor expansion for the exact solution $\mathbf{\tilde{u}}(t_{n+1})$:
\begin{align*}
\mathbf{\tilde{u}}(t_{n+1}) = \mathbf{\tilde{u}}(t_n) &+ \tau \mathbf{f} + \frac{\tau^2}{2} \mathbf{f}' \mathbf{f} + \frac{\tau^3}{6} [ \,\mathbf{f}''[\mathbf{f},\mathbf{f}) + \mathbf{f}' \mathbf{f}' \mathbf{f} \, ] \\
            &+ \frac{\tau^4}{24} [ \, \mathbf{f}''' \, (\mathbf{f}, \mathbf{f}, \mathbf{f}) + 3 \mathbf{f}'' \, (\mathbf{f}, \mathbf{f}' \,\mathbf{f} ) + \mathbf{f}' \mathbf{f}'' (\mathbf{f}, \mathbf{f}) + \mathbf{f}' \mathbf{f}' \mathbf{f}' \mathbf{f} \, ] + \mathcal{O}(\tau^5).
\end{align*}

\noindent {\bf 2. Taylor Expansion of the RK3 Numerical Solution}

The stages for the RK3 scheme are defined as:
\begin{align*}
    \mathbf{k}_1 &= \mathbf{f}(\mathbf{u}_n) \\
    \mathbf{k}_2 &= \mathbf{f}\left(\mathbf{u}_n + 0.5 \, \tau \mathbf{k}_1\right) \\
   \mathbf{k}_3 &= \mathbf{f}(\mathbf{u}_n - \tau \mathbf{k}_1 + 2\tau \mathbf{k}_2) \\
    \mathbf{u}_{n+1} &= \mathbf{u}_n + \tau \, \left(\frac{1}{6} \, \mathbf{k}_1 + \frac{2}{3} \, \mathbf{k}_2 + \frac{1}{6} \, \mathbf{k}_3 \right)
\end{align*}

\noindent {\bf Expanding $\mathbf{k}_2$}:  By Taylor expansion at $u_n$, 
\[
\mathbf{k}_2 = \mathbf{f}\left(\mathbf{u}_n + \frac{\tau}{2} \mathbf{f}\right) = \mathbf{f} + \frac{\tau}{2} \mathbf{f}'\mathbf{f} + \frac{\tau^2}{8} \mathbf{f}''(\mathbf{f},\mathbf{f}) + \frac{\tau^3}{48} \mathbf{f}'''(\mathbf{f},\mathbf{f},\mathbf{f}) + \mathcal{O}(\tau^4)
\]

\noindent {\bf Expanding $\mathbf{k}_3$}: The argument inside $\mathbf{k}_3$ is
\begin{align*}
\mathbf{u}_n + \tau (-\mathbf{k}_1 + 2 \mathbf{k}_2) &= \mathbf{u}_n + \tau \left(-\mathbf{f} + 2\left[\mathbf{f} + \frac{1}{2}\tau \mathbf{f}'\mathbf{f} + \frac{1}{8} \tau^2\mathbf{f}''(\mathbf{f},\mathbf{f}) + \mathcal{O}(\tau^3)\right]\right) \\
                                  &= \mathbf{u}_n + \tau \mathbf{f} + \tau^2\mathbf{\mathbf{f}}'\mathbf{f} + \frac{1}{4} \tau^3 \mathbf{f}''(\mathbf{f},\mathbf{f}) + \mathcal{O}(\tau^4)
\end{align*}
Letting $\Delta = \tau \mathbf{f} + \tau^2 \mathbf{f}' \mathbf{f} + \frac{1}{4} \tau^3 \mathbf{f}''(\mathbf{f}, \mathbf{f})$, we expand $\mathbf{k}_3 = \mathbf{f}( \mathbf{u}_n + \Delta)$ via multi-variable Taylor series:
\[
\mathbf{k}_3 = \mathbf{f} + \mathbf{f}'\Delta + \frac{1}{2}\mathbf{f}''(\Delta, \Delta) + \frac{1}{6}\mathbf{f}'''(\Delta, \Delta, \Delta) + \mathcal{O}(\tau^4)
\]
Substituting $\Delta$ into each component term up to $O(\tau^3)$ yield:
\begin{itemize}
    \item $\mathbf{f}'\Delta = \tau \mathbf{f}' \mathbf{f} + \tau^2 \mathbf{f}' \mathbf{f}' \mathbf{f} + \frac{1}{4} \tau^3 \mathbf{f}' \mathbf{f}''(\mathbf{f}, \mathbf{f})$
    \item $\frac{1}{2} \mathbf{f}''(\Delta, \Delta) = \frac{1}{2} \tau^2 \mathbf{f}''( \mathbf{f}, \mathbf{f}) + \tau^3 \mathbf{f}''(\mathbf{f}'\mathbf{f}, \mathbf{f}) + \mathcal{O}(\tau^4)$
    \item $\frac{1}{6} \mathbf{f}'''(\Delta, \Delta, \Delta) = \frac{1}{6} \tau^3 \mathbf{f}'''\mathbf{f},\mathbf{f},\mathbf{f}) + \mathcal{O}(h^4)$
\end{itemize}
Combining these gives:
\[
\mathbf{k}_3 = \mathbf{f} + \tau f'f + \tau^2\left[\mathbf{f}' \mathbf{f}' \mathbf{f} + \frac{1}{2}\mathbf{f}''(\mathbf{f},\mathbf{f}) \right] + \tau^3 \left[\frac{1}{4} \mathbf{f}'\mathbf{f}''(\mathbf{f},\mathbf{f}) + \mathbf{f}''(\mathbf{f}'\mathbf{f}, \mathbf{f}) + \frac{1}{6} \mathbf{f}'''(\mathbf{f},\mathbf{f},\mathbf{f})\right] + \mathcal{O}(\tau^4)
\]

\noindent {\bf Combining the Stages}: Since $u_{n+1} = u_n + \tau \left(\frac{1}{6}k_1 + \frac{2}{3}k_2 + \frac{1}{6}k_3\right)$, collecting terms by powers of $h$:
\begin{itemize}
    \item \textbf{$\tau^1$ term:} $\left(\frac{1}{6} + \frac{2}{3} + \frac{1}{6}\right) \mathbf{f} = \mathbf{f}$
    \item \textbf{$\tau^2$ term:} $\left(\frac{2}{3}\cdot\frac{1}{2} + \frac{1}{6}\cdot 1\right) \mathbf{f}'\mathbf{f} = \frac{1}{2} \mathbf{f}'\mathbf{f}$
    \item \textbf{$\tau^3$ term:} $ \frac{2}{3}\left(\frac{1}{8} \mathbf{f}''(\mathbf{f},\mathbf{f})\right) + \frac{1}{6}\left((\mathbf{f}'\mathbf{f}'\mathbf{f} + \frac{1}{2}\mathbf{f}''(\mathbf{f},\mathbf{f})\right)  = \frac{1}{6}\mathbf{f}''(\mathbf{f},\mathbf{f}) + \frac{1}{6}\mathbf{f}'\mathbf{f}'\mathbf{f}$
    \item \textbf{$\tau^4$ term:} $ \frac{2}{3}\left(\frac{1}{48}\mathbf{f}'''(\mathbf{f},\mathbf{f},\mathbf{f})\right) + \frac{1}{6}\left(\frac{1}{4}\mathbf{f}'\mathbf{f}''(\mathbf{f},\mathbf{f}) + \mathbf{f}''(\mathbf{f}'\mathbf{f}, \mathbf{f}) + \frac{1}{6}\mathbf{f}'''(\mathbf{f},\mathbf{f},\mathbf{f})\right) $ \\
   \begin{equation} \label{RK3FourthOrderTerm}
  = \tau^4 \left[ \frac{1}{24}\mathbf{f}'''(\mathbf{f},\mathbf{f},\mathbf{f}) + \frac{1}{24}\mathbf{f}'\mathbf{f}''(\mathbf{f},\mathbf{f}) + \frac{1}{6}\mathbf{f}''(\mathbf{f}'\mathbf{f}, \mathbf{f}) \right]
\end{equation}
\end{itemize}

\noindent {\bf 3. The Local Truncation Error}

The local error $\xi_{n+1}$ is defined as the difference between the exact solution and the numerical step:
\[
\xi_{n+1} = \mathbf{\tilde{u}}(t_{n+1}) - \mathbf{u}_{n+1}
\]
Since the scheme matches perfectly up to $\mathcal{O}(\tau^3)$, the remaining leading-order error terms appear at $\mathcal{O}(\tau^4)$, scaled by the elementary differentials:

\begin{table}[h!]
\centering
\begin{tabular}{l@{\hspace{0.4cm}}c@{\hspace{0.4cm}}c@{\hspace{0.4cm}}c}
\toprule 
Elementary Differential & Exact Coef. & Numerical Coef. & Local Error Coef. \\
\midrule
\vspace{0.1in}
$T_1 = \mathbf{f}'''(\mathbf{f},\mathbf{f},\mathbf{f})$ & $\frac{1}{24}$ & $\frac{1}{24}$ & $\mathbf{0}$ \\ 
\vspace{0.1in}
$T_2 = \mathbf{f}''(\mathbf{f}'\mathbf{f}, \mathbf{f})$ & $\frac{3}{24}$ & $\frac{4}{24}$ & $\mathbf{-\frac{1}{24}}$ \\ 
\vspace{0.1in}
$T_3 = \mathbf{f}'\mathbf{f}''(\mathbf{f},\mathbf{f})$  & $\frac{1}{24}$ & $\frac{1}{24}$ & $\mathbf{0}$ \\ 
$T_4 = \mathbf{f}'\mathbf{f}'\mathbf{f}'\mathbf{f}$  & $\frac{1}{24}$ & $        0        $ & $\mathbf{\frac{1}{24}}$ \\
\bottomrule
\end{tabular}
\end{table}

\noindent {\bf Final Local Error Expression}

Combining the non-zero coefficients, the detailed local truncation error expression for the classical RK3 scheme is:
\begin{equation} \label{localError}
\xi_{n+1} = \tau^4 \left( -\frac{1}{24} \mathbf{f}''(\mathbf{f}'\mathbf{f}, \mathbf{f}) + \frac{1}{24} \mathbf{f}'\mathbf{f}'\mathbf{f}'\mathbf{f} \right) + \mathcal{O}(\tau^5)
\end{equation}
In the computation of this case study, we omit the $\mathcal{O}(\tau^5)$ and use the fourth order principal part as the local error estimate. This is to be able to calculate the contribution of the local error to the global error, while not causing a big error of error. We will show by how much we can make the calculated global error informative. In case the reliability of only using the fourth order terms is doubted, one can calculate the terms of the error in order 5 and omit $\mathcal{O}(\tau^6)$, but we don't want to make thing too tedious in the discussion.

\section{Error propagation in a time step}

The exact error propagation in the numerical smoothness framework will help us in the surprising computation of the global error. Although we will just compute the principal part of the global error, which is not a traditional worst case scenario error bound, the information delivered by the principal part error plots is surprisingly abundant.

We start this work in the general case. Let $\mathbf{u}(t)$ and $\mathbf{v}(t)$ be two solutions of $\mathbf{u}' = \mathbf{f}(\mathbf{u})$. They have $\mathbf{u}_0$ and $\mathbf{v}_0$ as their initial values respectively at $t=a$. Taking the difference of the two equations satisfied by $\mathbf{u}$ and $\mathbf{v}$, we get
$$
( \, \mathbf{u}(t)-\mathbf{v}(t) \, )' = \mathbf{f}(\mathbf{u}(t)) - \mathbf{f}(\mathbf{v}(t)).
$$
Let $g(\theta) =  \mathbf{f} [ \theta \mathbf{u}(t) + (1-\theta) \mathbf{v}(t))]$. Then $g(0) =  \mathbf{f}(\mathbf{v}(t))$, $g(1) = \mathbf{f}(\mathbf{u}(t))$. Moreover,
$$
g'(\theta) =  \mathbf{f}' [ \theta \, \mathbf{u}(t) + (1-\theta) \, \mathbf{v}(t) ]  [\mathbf{u}(t)-\mathbf{v}(t) ].
$$
Now,
\begin{align*}
(\mathbf{u}(t)-\mathbf{v}(t))' &=  \mathbf{f}(\mathbf{u}(t)) - \mathbf{f}(\mathbf{v}(t)) = g(1)-g(0) = \int_0^1 g'(\theta)\, d\theta \\
 &= \int_0^1 \mathbf{f}' [ \theta \, \mathbf{u}(t) + (1-\theta) \, \mathbf{v}(t) ] d\theta \,[\mathbf{u}(t)-\mathbf{v}(t) ] .
\end{align*}
It can be observed that the last equation can be taken as a linear homogeneous equation about $ \mathbf{u}(t)-\mathbf{v}(t)$, which can be solved in $[a,t]$ as
\begin{equation}  \label{Propag}
 \mathbf{u}(t)-\mathbf{v}(t) = e^{ \int_{a}^{t}  \int_0^1 \mathbf{f}' [ \theta \, \mathbf{u}(\chi) + (1-\theta) \, \mathbf{v}(\chi) ] d\theta \,d\chi }\,[\mathbf{u}(a)-\mathbf{v}(a) ].
\end{equation}
The matrix exponential on the right hand side is basically from the Jacobian matrix. It carries all the information and actions of error propagation. 

{\bf Remark}: The exponential  matrix in the RHS is very similar to the Monodromy matrix in the Floquet theory on the stability of the limit cycles of ODE systems. The difference is that they are focusing on one solution - the limit cycle, we are working on the distance between two solution of an ODE, to see how the distance evolves from $a$ to $t$. We will use the error estimate we have done for $ \mathbf{u}(a)-\mathbf{v}(a) $ to make sure it is already a close pair of initial points. Then we show that the distance remains small in a time step. We will always make $\mathbf{v}(t) = \mathbf{\tilde{u}}(t)$ and $\mathbf{u}(t)$ to be the original solution of our equation to solve.  

{\bf Remark}: Everything in the matrix exponential  depends on $\mathbf{u}(a)$, $\mathbf{v}(a)$, $\mathbf{u}(a)-\mathbf{v}(a)$, and the function $\mathbf{f}(\mathbf{u})$. The integral with the dummy variable $\theta$ needs to be further studied and specified to make the properties of the error propagation clear. Under different situations, we can use the last equation in deeper analysis of the exact error propagation  between two solutions in an interval $[a,t]$.

{\bf $\diamondsuit$ Estimating the error propagation in a time step}
Consider $\mathbf{u}(t)-\mathbf{\tilde{u}}(t)$, $t \in [t_n, t_{n+1}]$. This is the propagation error defined as in Splitting 2. To be clear, we are using (\ref{Propag}) with $a=t_n$, $t=t_{n+1}$, $\mathbf{u}(t)$ being the original solution, $\mathbf{v}(t) = \mathbf{\tilde{u}}(t)$, $\mathbf{v}(t_n) = \mathbf{u}_N(t_n)$ in the way the initial value of $\mathbf{\tilde{u}}(t_n)$ is defined in $[t_n,t_{n+1}]$, and $\mathbf{u}(t_n)$ is just the value of the original solution at $t_n$.

The ratio of the norms of $\mathbf{u}(t)-\mathbf{\tilde{u}}(t)$ and $\mathbf{u}(t_n)-\mathbf{u}_N(t_n)$ is the error propagation rate. In addition, we also need to keep the information on how the differential equation evolve $\mathbf{u}(t_n)-\mathbf{u}_N(t_n)$ by expansion, contraction, rotation and so on. The details need to be carefully estimated in each step, and composited between steps. If we fail to take the details into account, we will not make  very tight estimate of the error propagation rate step by step.

At $t_n$, $\tilde{u}(t_n)$ has been computed, it is reasonable to assume that $\tilde{u}(t_n)$ is bounded. In addition, $\mathbf{u}(t_n) - \mathbf{\tilde{u}}(t_n) $ has been estimated with an error bound of $\mathcal{O}(\tau^3)$ at the end of the last timestep. With the boundedness of $\mathbf{f}'$, in the very short time of $[t_n, t_{n+1}]$, we can get  $\mathbf{u}(t) - \mathbf{\tilde{u}}(t) = \mathcal{O}(\tau^3)$, thus
$$
\mathbf{f}' [ \theta \, \mathbf{u}(t) + (1-\theta) \, \mathbf{\tilde{u}}(t) ] 
=  \mathbf{f}' [  \mathbf{\tilde{u}}(t) + \theta \,( \mathbf{u}(t) -\, \mathbf{\tilde{u}}(t)) ]
= \mathbf{f}'( \mathbf{\tilde{u}}(t)) + \mathcal{O}(\tau^3).
$$
Consequently, by emphasizing $\mathbf{\tilde{u}}(t_n) = \mathbf{u}_N(t_n)$ in $[t_n,t_{n+1}]$, 
$$
 \int_{t_n}^{t_{n+1}}  \int_0^1 \mathbf{f}' [ \theta \, \mathbf{u}(\chi) + (1-\theta) \, \mathbf{\tilde{u}}(\chi) ] d\theta \,d\chi 
= \int_{t_n}^{t_{n+1}}  \mathbf{f}'( \mathbf{\tilde{u}}(\chi)) d\chi + \mathcal{O}(\tau^4) .
$$
Now one can also do the  expansion of $\mathbf{f}'(\mathbf{\tilde{u}}(\chi))$ at $\mathbf{u}_N(t_n)$ , while omitting the evaluation point, 
$$
\mathbf{f}'( \mathbf{\tilde{u}}(\chi)) = \mathbf{f}' + \mathbf{f}''\mathbf{f} (\chi-t_n) + \frac{1}{2}[\mathbf{f}'''(\mathbf{f},\mathbf{f}) + \mathbf{f}'' \mathbf{f}' \mathbf{f} ] (\chi-t_n)^2             + \mathcal{O}(\tau^3). 
$$ 
 After integrating $\chi \in [t_n, t_{n+1}]$ and moving all $\mathcal{O}(\tau^4)$ terms to the LHS, we get
$$
 \int_{t_n}^{t_{n+1}}  \mathbf{f}'( \mathbf{\tilde{u}}(\chi)) d\chi + \mathcal{O}(\tau^4)
= \tau \mathbf{f}' + \frac{\tau^2}{2} \mathbf{f}'' \mathbf{f} + \frac{\tau^3}{6} [\mathbf{f}'''(\mathbf{f},\mathbf{f}) + \mathbf{f}'' \mathbf{f}' \mathbf{f} ]
$$
with all the terms of the RHS evaluated at $\mathbf{u}_N(t_n)$. Realizing that the entire RHS is a matrix, let's temporally call the matrix by $M$. In this case we have the matrix exponential in (\ref{Propag}) equals to $e^{M + \mathcal{O}(\tau^4)}$. 

Here we use the Taylor expansion again on $e^M$. It is easy to see
$$
e^M = I + M + \frac{1}{2} M^2 + \frac{1}{6} M^3 +  \mathcal{O}(\tau^4).
$$ 
Let's define   
$$
P_n = I + M + \frac{1}{2} M^2 + \frac{1}{6} M^3 ,
$$
then we can rewrite (\ref{Propag}) as

\begin{equation} \label{PnPropag}
 \mathbf{u}(t_{n+1})-\mathbf{\tilde{u}}(t_{n+1}) = ( P_n  + \mathcal{O}(\tau^4)   ) \,[\mathbf{u}(t_n)-\mathbf{u}_N (t_n) ].
\end{equation}

We call $P_n$  the {\bf error propagation matrix} of $[t_n, t_{n+1}]$, because now we have approximately
$$
\mathbf{u}(t_{n+1})-\mathbf{\tilde{u}}(t_{n+1}) \approx P_n \, [\mathbf{u}(t_n)-\mathbf{u}_N (t_n) ].
$$
Here we use  the approximation in the sense of replacing $e^{\beta \tau + \mathcal{O}(\tau^4)}$ by $e^{\beta \tau}$ for the error propagation rate. By doing so we can avoid a lot of the tediousness of doing rigorous inequalities. Since $\mathbf{u}(t_n)-\mathbf{u}_N (t_n) = \mathcal{O}(\tau^3)$, we believe that the 
$\mathcal{O}(\tau^4)$ in (\ref{PnPropag}) must be sufficiently small, at least for a short time of consecutive composition of error propagation matrix actions in the next section. 


\section{Estimating error propagation over a short time}

By a short time, we mean one of a few different cases. This can be one period for a periodic solution, or a similar time interval. For nonlinear systems, it could happen that the contraction property will not show up unless a long enough time interval is allowed for an observer. A two level error propagation technique was given in \cite{tsEwing}. In this section, we discuss how to estimate the error propagation and accumulation together within a short time consisting of several time steps. The long time error estimation is left to be discussed in the following section, to taken advantage of the attractions, such as the limit cycle of a VdP system.

If we use $\xi_{n+1}$ as the local error for the time step, by Splitting 2,  we have 
\begin{align*}
\mathbf{u}(t_{n+1}) - \mathbf{u}_N(t_{n+1}) &= \mathbf{u}(t_{n+1}) - \mathbf{\tilde{u}}(t_{n+1}) +  \mathbf{\tilde{u}}(t_{n+1}) -  \mathbf{u}_N(t_{n+1}) \\
                                                                      & \approx P_n \, [\mathbf{u}(t_n)-\mathbf{u}_N (t_n) ] + \xi_{n+1}.
\end{align*}
Denoting the global error by $e_n = \mathbf{u}(t_n)-\mathbf{u}_N (t_n)$, we can rewrite the error relation above briefly, that is
\begin{equation} \label{globalErrorEndStep}
e_{n+1} \approx P_n e_n + \xi_{n+1}.
\end{equation}
Using this error relation repeatedly, we get
\begin{equation} \label{propagation}
e_{n+1} \approx \xi_{n+1} + P_n \xi_n + P_n P_{n-1} \xi_{n-1} + \cdots + P_n \cdots P_1 \xi_1 + P_n \cdots P_1 P_0 e_0.
\end{equation}
In this way, we get to optimize the error propagation by calculating how the global error is propagated step by step. The error propagation matrices $P_n$ are easily computable. More importantly, while losing some rigorousness with the thrown away $\mathcal{O}(\tau^4)$, we get to keep the multidimensional behavior of the error propagation matrix $P_n$. 

For the local error $\xi_{n+1}$, we can compute its main part of $\mathcal{O}(\tau^4)$ while neglecting the remaining part of $\mathcal{O}(\tau^5)$. For the global error, one can easily compute the products $\Pi_{i=k}^n P_i$ and observe that all these products for $k=0, \cdots, n$ are uniformly bounded. Then, from (\ref{propagation}), it is obvious $\| e_{n+1} \| = \mathcal{O}(\tau^3)$.

{\bf $\diamondsuit$ The numerical experiment}

In Figure \ref{figVdP}, we present the results of our RK3 solution for the VdP system with $\mu = 3$. Let specify the meaning of the six plots one by one.

\begin{figure}[htbp]
    \centering
    \includegraphics[width=\textwidth]{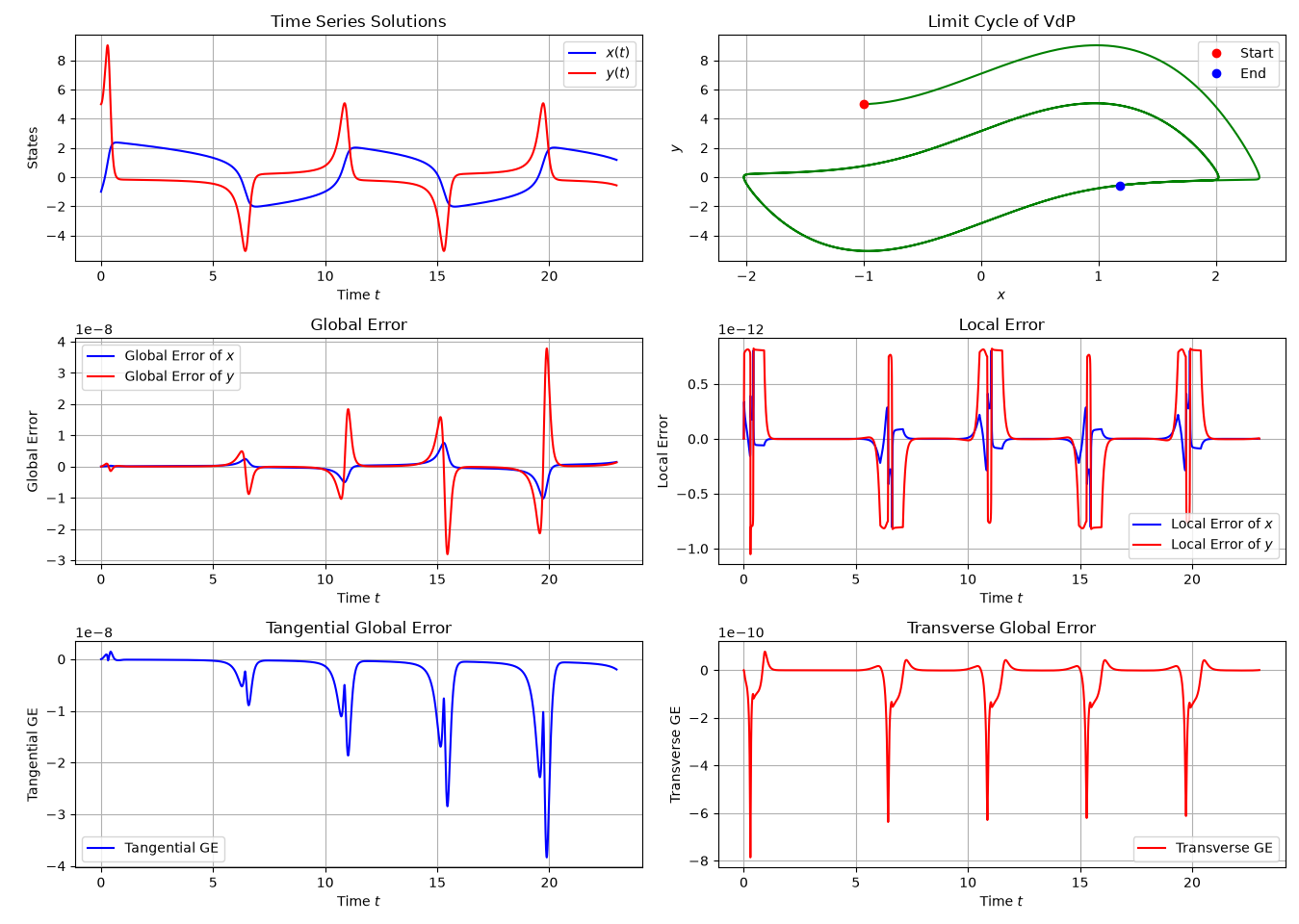}
    \caption{Error of Van der Pol solution}
    \label{figVdP}
\end{figure}

We choose $\mu = 3$ because this makes the system significantly nonlinear, slightly stiff, merging into the limit cycle quickly in a plot, presenting the effectiveness of our error propagation  matrix clearly, and helping us explaining the advantages of the numerical smoothness framework for error analysis explicitly. We will focus on this one numerical experiment in this lecture note. We will not touch an extremely stiff system ($\mu >>1$), not touch another numerical scheme, which would increase the length of the chapter but not making a big difference for us in our effort of promoting the numerical smoothness paradigm. 

{\bf Upper Left Plot:} The upper left plot shows the two components of the numerical solution of $x(t)$ and $y(t)$. One can see that the solution moves through the the two parts of the orbit where $y(t) \approx 0$ with most of the time. In comparison, the solution moves quickly along the part of the orbit where $y(t)$ is positive and away from $0$ and along the part of the orbit where $y(t)$ is negative and away from $0$. Where $y(t)>0$, we see a positive sharp spike of $y(t)$ and very fast growth of $x(t)$ toward $2.0$. Where $y(t)<0$, we see a negative sharp spike of $y(t)$ and very fast decrease of $x(t)$ toward $-2.0$. All the spikes in the other plots also appear at the same time. These quick exponential variations are due to the nonlinear short time energization and dissipation of the VdP model. They are more severe with bigger $\mu$, but these spikes are not considered as a consequence of stiffness.

{\bf Upper Right Plot:} The upper right plot is the limit cycle revealed by the solution orbit when $t$ is sufficiently large, and the initial part of the orbit before the solution merges into the limit cycle. When the initial point is set to be away from the  limit cycle, the solution will move almost parallel to the limit cycle until it enters a region of the phase plane above or below the line $y=0$ outside of $x \in [-1,1]$. From one of these regions, the solution will be captured by the strong attraction of the limit cycle or the outward extension of the limit cycle near $y=0$. One thing might be misleading for us, which is the smooth look of the orbit where $y(t)$ is away from $0$. In these two parts of the orbit, $x(t)$ and $y(t)$ are accelerating/decelerating very quickly and simultaneously. This is where adaptive error control actions are needed.

{\bf Middle Right Plot:} The middle right plot is the local truncation error $\xi_n$, $n = 0, 1, \cdots $. One can recognize the effect of the adaptive local error control. For each time step, the step size $\tau$ is chosen so that the local error $\xi_{n+1}$ computed by (\ref{localError}) is under $10^{-12}$. The 4th order derivatives in (\ref{localError}) are available to us from the smoothness indicator at each $t_n$. The plot shows that the local error stays much smaller than the error tolerance most of the time. Only the spikes of the local error need to be dealt with twice in a period of the limit cycle. 

{\bf Middle Left Plot:} This plot shows the global error $e_n$ in all the time steps. The global error $e_n$ for all n are computed by (\ref{propagation}). The global error terms are showing fast growth and fast decay where $y(t)$ is away from $0$. In the parts of the orbit where $y(t) \approx 0$, the global error shows to be almost $0$, because it is under lasting compression over there while $u(t)$ is moving forward very slowly. However, the global error is surviving during the compression and its residual is actually bigger than the residual of the global error during the last compression a half of the limit cycle period ago. When the next energization comes, the residual of the global error grows exponentially to a higher peak from the bigger residual. 

{\bf Lower Left Plot:} This plot shows the component of the global error in the tangential direction of the orbit. At each $t_n$, we project $e_n$ to the tangent direction of the orbit at $\mathbf{u}_N(t_n)$. The unit tangent vector of the orbit at $t_n$ is $[f_1, f_2] = \frac{1}{\| \mathbf{f} \|}  \mathbf{f}$ evaluated at $\mathbf{u}_N(t_n)$. Then the tangential component of $e_n$ is $e_n^T = [f_1,f_2] \cdot e_n$. This plot shows all the $e_n^T$ as a curve. It is very interesting to see that the profile of the spikes of the tangential error curve lines up purely in a straight line. This matches the theoretical expectation that the error along the orbit has no self correction mechanism, and there is a simple accumulation over time, regardless of the local exponential growth and decay from the energizations and dissipations.  

{\bf Lower Right Plot:} This is the plot of the transverse component of the global error perpendicular to tangential component. At each $t_n$, it is computed by $e_n^P = [f_2, -f_1] \cdot e_n$. In the plot of the $e_n^P$ curve, we can see that the profile of the spikes form a horizontal straight line. So there is no growth whatsoever. The accumulation of local error and the error compression in the transverse direction have reached a balance, keeping $e_n^P$ staying below $10^{-10}$. In comparison, $e_n^T$ is in the magnitude $10^{-8}$ and linearly growing.

In addition to Figure \ref{figVdP}, we also computed the period $T$ of the limit cycle, the matrix $Q^T$ consisting of the consecutive product of the error propagation matrices over the last period $[23-T, 23]$, and the eigenvalues $\{ \nu_1, \nu_2 \}$ and eigenvectors of $Q^T$. The results show that
$$
T \approx 8.859095, \qquad \nu_2 \approx 1, \qquad \nu_1 \approx  10^{-16}. 
$$
The first eigenvalue $\nu_2 =1$ means that, over a period $T$, the tangential error should be unchanged by the pure error propagation over one period.  The second eigenvalue $\nu_1 \approx 0$ tells us that the transverse error is compressed by the strong attraction of the limit cycle. Both of these results are consistent with the Lyapunov exponents in the Floquet theory: $\nu_1 = e^{\lambda_1} \approx 0$ and $\nu_2 = e^{\lambda_2} \approx 1$. See the next section for a little more details about the Floquet theory.

\section {Long term error propagation along the limit cycle}

Most part of this section is a modified copy of a Gemini AI article on the topic. We obtained their permission to include the article in this lecture note. 

To estimate the attraction rate (or contraction rate) of the Van der Pol limit cycle over a single period, we look to the {\bf Lyapunov exponents} or, more specifically, the {\bf integrated divergence} of the system's vector field along the periodic orbit.
By applying Floquet theory or calculating the time-averaged divergence, you can determine how quickly nearby trajectories converge to the limit cycle.
Here is the analytical and numerical framework to estimate this rate.

{\bf 1. The Governing System}

The classic Van der Pol oscillator is governed by the second-order differential equation: 
$$
\ddot{x} - \mu (1-x^2) \dot{x} + x =0 .
$$
Rewriting this as a first-order system of autonomous differential equations by making $x=x$ and $y=\dot{x}$:
\begin{align*}
\dot{x} &= y, \\
\dot{y} &= \mu (1-x^2) y - x,
\end{align*}
where $\mu > 0$ is the nonlinearity parameter controlling the damping. If we make $\mathbf{x} = (x, y)^T$, we get the VdP system in the vector form 
$$
\dot{\mathbf{u}} = \mathbf{f}(\mathbf{u}).
$$

{\bf 2. The Theoretical Framework: Divergence and Phase Space Volume}

The local contraction or expansion rate of a phase space volume element along a trajectory is given by the divergence of the vector field, $\nabla \cdot \mathbf{f}$.
For the Van der Pol system, the Jacobian matrix $J(\mathbf{u})$ is:
$$J(x, y) = \begin{pmatrix} 0 & 1 \\ -2\mu x y - 1 & \mu(1 - x^2) \end{pmatrix}$$
The trace of the Jacobian matrix yields the divergence:
$$\text{Tr}(J) = \nabla \cdot \mathbf{f} = \mu(1 - x^2)$$
•	When $|x| > 1$, $\nabla \cdot \mathbf{f} < 0$ (the flow is locally contracting/attracting).\\
•	When $|x| < 1$, $\nabla \cdot \mathbf{f} > 0$ (the flow is locally expanding/repelling).

{\bf 3. Estimating the Net Attraction Rate Over a Period ($T$)}

Because a limit cycle is a closed isolated periodic orbit $\gamma(t)$ with period $T$, a trajectory spending time in both the repelling and contracting regions must balance out to be net attractive overall in a period.
According to Liouville's formula, the growth or decay of an infinitesimal area element $\delta A(t)$ transverse to the orbit over one full period is:
$$\delta A(T) = \delta A(0) \exp \left( \int_0^T \nabla \cdot \mathbf{f}(\gamma(t)) \, dt \right)$$
Therefore, the {\bf average characteristic exponent} (which corresponds to the non-zero Floquet exponent, $\lambda_1$) defines the net attraction rate:
$$\lambda_1 = \frac{1}{T} \int_0^T \mu \left(1 - x(t)^2\right) dt$$
Since one Characteristic/Floquet exponent associated with the phase along the continuous limit cycle is always exactly $\lambda_2 = 0$, $\lambda_1$ represents the transverse stability. For an attractive limit cycle, $\lambda_1$ must be negative.

{\bf 4. Estimation Methods based on $\mu$}

{\bf Case A}: Weakly Nonlinear Regime ($\mu \ll 1$)

When $\mu$ is very small, the limit cycle is nearly a circle of radius $2$, described roughly by $x(t) \approx 2\cos(t)$ with period $T \approx 2\pi$.
Substituting this approximation into the attraction rate integral:
$$\lambda_1 \approx \frac{1}{2\pi} \int_0^{2\pi} \mu \left(1 - 4\cos^2(t)\right) dt$$
Using the identity $\cos^2(t) = \frac{1 + \cos(2t)}{2}$:
$$\lambda_1 \approx \frac{\mu}{2\pi} \int_0^{2\pi} (1 - 2 - 2\cos(2t)) \, dt = \frac{\mu}{2\pi} \left[ -t \right]_0^{2\pi} = -\mu$$
•	Result: For small $\mu$, the net attraction rate per unit time is approximately $-\mu$. The orbit contracts by a factor of $e^{-2\pi\mu}$ per period.

{\bf Case B: Strongly Nonlinear / Relaxation Regime ($\mu \gg 1$)}

When $\mu$ is large, the system becomes a relaxation oscillator. The trajectory spends most of its period $T$ creeping slowly along the attracting branches of the cubic slow manifold ($y \approx \frac{x^3}{3} - x$ in Liénard coordinates), where $|x| > 1$. The rapid horizontal "jumps" between branches take negligible time.
In this regime, the period scales as:
$$T \approx (3 - 2\ln 2)\mu \approx 1.614\mu$$
Because the trajectory spends almost the entire period bound tightly to the slow manifold where $x$ transitions from $2$ down to $1$, the integrated divergence over these slow segments dominates. Asymptotically, the time-averaged attraction rate scales directly with the geometry of this manifold, yielding a highly stable, strongly attracting rate where $\lambda_1 \sim -\mathcal{O}(\mu^2)$ during the slow phases, balancing out to a net average exponent that scales as $-\mathcal{O}(1)$ or structural fractions of $\mu$ depending on the specific coordinate scaling chosen (e.g., Liénard vs. classic physical coordinates).

{\bf 5. Numerical Estimation Pipeline}

If you are working with an arbitrary $\mu$, you can compute the attraction rate accurately in Python using these steps:
\begin{enumerate}
\item	Find the Limit Cycle: Integrate the system forward long enough for transient behaviors to die out until it settles onto $\gamma(t)$.\\
\item	Determine Period $T$: Use root-finding or event detection (e.g., SciPy's solve-ivp with an event looking for directional zero-crossings of $x$) to find the exact period $T$.\\
\item	Evaluate the Integral: Concurrently integrate the system state and the variational divergence equation:
\end{enumerate}
$$\dot{z} = \mu(1 - x^2)$$
The value $z(T) / T$ at the end of one cycle yields your exact attraction rate $\lambda_1$.

The Gemini article ended here. Instead of the $\lambda_1 = z(T)/T$, we actually need $e^{\lambda_1 T} $, which is the attraction rate of the limit cycle over a period $T$. Alternatively, we can also  compute $Q_T = P_{m-1}P_{m-2} \cdots P_1 P_0$ , where each one of the error propagation matrices, $P_j$, comes from the numerical solution of the VdP for exactly one step, $Q^T$ is the product of them over a period $T$ with an initial point almost on the limit cycle. We can understand the matrix $Q_T$ as an error propagation matrix of limit cycle, except it depends on the choice of the initial/ending point. However, the eigenvalues of the matrix should be an invariant of the limit cycle. Our computation shows, indeed as expected, one eigenvalue of $Q_T$ is $1$, corresponding to the error in the tangential direction of the orbit. Meanwhile, the other eigenvalue of $Q_T$ is positive but extremely close to $0$, something like $10^{-15}$, which is consistent to the $e^{\lambda_1 T}$. For example, when $\mu$ is moderately large, like $\mu = 3$, we numerically computed $T \approx 8.859095$, $\lambda_1 \approx -3.94$, making $e^{\lambda_1 T} \approx e^{-34.9} \approx 6.94* 10^{-16}$. This is to say that the limit cycle possesses an extremely strong error contraction ability in its transverse direction. Finally, as the error contraction factor of the limit cycle, to be safe, we define 
$$
\Theta =  e^{\lambda_1 T}.
$$

If we solve the VdP in a time interval $[0,S]$ and we really want to see the error between $\mathbf{u}(S)$ and $\mathbf{u}_N (S)$, then we have to use the  estimated error  given by (\ref{propagation}) and keep in mind that the entire $\{e_n\}$ is basically the error along the orbit, while the transverse component of the error is pretty much negligible.

If we continue the numerical solution past the time interval $[0,S]$, it is easy to see that the numerical solution will continue to follow the limit cycle within a fixed average distance from the limit cycle. However, the numerical solution's phase position in the limit cycle can shift. The shift could be forward or backward depending on the numerical scheme and its local error's tendency.  If the phase position is important, one may need to study of the local error in more detail and make some corrections accordingly.

\section{Case study conclusions}

We did this case study to display the advantages of the numerical smoothness framework for error estimation. 

\begin{itemize}

\item The error analysis remains the same for different single step numerical schemes, except it needs the formula of the used scheme's local error. The smoothness indicator is computed at each $t_n$ using the same set of formulas. 

\item  The error propagation analysis in the framework is on a pair of two ODE solutions, which are close to each other with an distance proportional to the global error. In the case of VdP, we chose to employ the error propagation matrix, which is made possible by the numerical smoothness framework. we see how much information we are able to collect from the error propagation matrices. 

\item For this nonlinear VdP system, if we were to use a Lipschitz condition or the Gronwall Lemma in the error propagation (stability) analysis as in Splitting 1, our error estimate would have been bigger than the diameter of the limit cycle before $t=1$, if we used a reasonable time step size. 

\item Another point about the error propagation matrix is that this is the first time the author came up with the idea of using the technique learned from the  Floquet theory. It might have been used in the literature of numerical ODEs, but the author of this lecture note hasn't seen others doing so. It seems to be hard to use this idea for numerical error propagation (error propagation of schemes). Meanwhile, it should be very natural to use the technique to other ODE systems. It is amazing to be able to preserve so abundant information about the global error. Further research along the route can be expected.  
  
\item In Splitting 1, the ODE's high order derivative $u^{(k)}(t_n)$ is not computable except for at $t_n = t_0$, thus the local error is not computable. The numerically propagated error is also not computable. In the case study of VdP, we see that, thanks to Splitting 2, we can compute both. The local error $\xi_{n+1}$ can be computed by (\ref{localError}). The propagated error can be computed by (\ref{propagation}). 

\end{itemize}

\chapter{Further reading and future research}

\section{Further reading on existing results}

The author Tong Sun of this lecture note has written several papers on the topic of numerical smoothness. 

Two of the papers are on ODEs. Two are on the necessity of numerical smoothness. Three are on hyperbolic conservation laws. The other two are on parabolic PDEs. 

{\bf ODE results}

Back in 1998, R. Ewing  and T.Sun wrote the first paper on ODE \cite{tsEwing}, where they focused on the error propagation of the differential equation. 
At that time, the authors had not realized the difference between numerical solution's stability and smoothness, hence they defined an indicator which is exactly what we have today as the temporal smoothness indicator but they named it stability indicator. 10 years later, after writing two papers on numerical parabolic equations and their error analysis , Tong Sun finally realized that, under the error Splitting 2, the high order derivatives computed for $\tilde{u}_N(t)$ at each $t_n$ is in fact delivering the information on the smoothness of the numerical solution, not information on their stability. With this new understanding, it became clear that $u_N(t_n)$ should be smooth in some sense, and $u_N(t_n)$ should have been made smooth by the numerical scheme.  All of these together motivated the second ODE paper \cite{sunRK} by T. Sun, in which the traditional stability condition on Runge-Kutta methods and the stability region were modified to get the numerical smoothing condition on Runge-Kutta methods.

{\bf Necessity results}

The papers \cite{necessity, chou, chousun}  on the necessity of numerical smoothness have been covered in Section 2.3. So we focus on the papers on PDEs now.

{\bf Error splitting for PDEs}

An important difference between Splitting 1 and Splitting 2 for PDEs is the different ways  of using semi-discrete solutions. The plot below is how to do the error splitting in $[t_n,t_{n+1}]$ in the numerical smoothness framework (Splitting 2) for evolution equations. In the splitting, $u(t_n)$ is the value of the exact solution $u(t)$ at $t_n$, $u^c_n$ is the value of the computed numerical solution at $t_n$, $u(t_{n+1})$ is the value of the exact solution at $t_{n+1}$, $u^c_{n+1}$ is the value of the computed numerical solution at $t_{n+1}$. The global error $u(t_{n+1})-u^c_{n+1}$ at $t_{n+1}$ is split into three parts by inserting $\tilde{u}(t_{n+1})$ and $u^h(t_{n+1})$, where $\tilde{u}(t)$ is the solution of the original evolution equation in the time domain $[t_n,t_{n+1}]$ with initial condition $\tilde{u}(t_n) = u^c_n$ , and $u^h(t)$ is the semi-discrete solution in the time domain $[t_n,t_{n+1}]$ with initial condition $u^h(t_n) = u^c_n$.  

\setlength{\unitlength}{1.0mm}
\begin{picture}(70,30)(-10,10)
    \linethickness{0.6pt}
    \qbezier(30,8)(50,14)(70,26)
    \linethickness{0.5pt}
    \qbezier(30,0)(50, 6)(70,18)
    \qbezier(30,0)(50, 6)(70,15)
    \qbezier(30,0)(50, 6)(70,12)
    \put(25,-2){\makebox(0,0)[b]{$u^c_n$}}
    \put(24, 6){\makebox(0,0)[b]{$u(t_n)$}}
    \put(78, 9){\makebox(0,0)[b]{$u^c_{n+1}$}}
    \put(80.4,13.5){\makebox(0,0)[b]{$u^{h}(t_{n+1})$}}
    \put(80,19){\makebox(0,0)[b]{$\tilde{u}(t_{n+1})$}}
    \put(80,25){\makebox(0,0)[b]{$u(t_{n+1})$}}
    \put(5, -10){\vector(1,0){85}}
    \put(10,-11){\vector(0,1){45}}
    \put(29.8,-10){\line(0,1){18}}
    \put(70,-10){\line(0,1){36}}
    \put(30,-14){\makebox(0,0)[b]{$t_n$}}
    \put(70,-14){\makebox(0,0)[b]{$t_{n+1}$}}
\end{picture}

\vspace{1.0in}

In this three part splitting, $u(t_{n+1}) - \tilde{u}(t_{n+1})$ is the propagation is the global error $u(t_n) - u^c_n$ in the time step by the solutions of the evolution equation. $\tilde{u}(t_{n+1}) - u^h(t_{n+1})$ is the local spatial discretization error, $u^h(t_{n+1}) - u^c_{n+1}$ is the local temporal discretization error. One can see from the papers by Tong Sun and his collaborators how such a three part splitting can help the error analysis to optimize both error propagation estimates and local error estimates.

{\bf Parabolic PDE results}

The two papers on parabolic equations are \cite{sun} on partially implicit schemes and \cite{sunF} on semi-linear problems. \cite{sun} focuses the numerical smoothness issue when the scheme is not fully implicit (like backward Euler) but partially implicit. The smoothness indicator provides a reliable criterion at the run time to inform the user if local error is well under control. \cite{sunF} focuses on the nonlinearity of of the reaction terms in the equation. Taking advantage of the fact that the nonlinear reaction terms only acts on the low frequency components, we know that the numerical smoothness is expected to be valid. So there should be no trouble with local error. One want to focus on the error propagation. In the case of semi-discrete parabolic equations of \cite{sunF}, one get to use the one-sided Lipschitz condition to estimate short term error propagation and use the new concept of moving indicators defined in \cite{sunF} to do long term error propagation. Most importantly, the spatial smoothness indicator as in \cite{necessity} had not been found at that time (~2006). What served the role of a spatial smoothness indicator was the elliptic reconstruction. 

{\bf 1-D scalar conservation law results} 

The major progress of the numerical smoothness framework on the PDEs was made in the nonlinear scalar conservation lows. T. Sun and his PhD students wrote three papers on discontinuous Galerkin finite element method \cite{Rumsey, Fode} and WENO finite volume method \cite{WENO}. In addition to the cases of smooth solutions, T. Sun and Adamou Fode wrote a paper in the case of a shock wave with discontinuous Galerkin finite elements coupled with Raikine-Hugoniot front-tracking \cite{Fode}. For all these numerical hyperbolic PDE solutions, the major new issue is how to use the smoothness of the piecewise polynomial numerical solution hiding behind the derivative jumps. A set of auxiliary functions and various kinds of solutions were used to make the numerical smoothness work for local error estimates. The details are very tedious, but it is guaranteed to work because all the details involve only local analysis. After successfully overcoming all these  difficulties, the good news is that the error propagation gets to be done by the $L_1$-contraction property of the scalar conservation law for a pair of entropy solutions. 

Since the numerical smoothness concept is new and only Tong Sun and his colleagues and students have been working on this, the available results are limited to those mention above.

\section{Future research} 

As we have summarized, the research on the topic of numerical smoothness is extremely limited. Comparing to all the {\it a priori} error estimates along the   error analysis framework of numerical stability (Splitting 1), it is extremely unbalanced and strange to see that the numerical smoothness framework (Splitting 2) is basically left untouched for the past dozens of years. All the advantages of Splitting 2 have been blocked from people's attention by the ``nonexistence'' of the smoothness of $\tilde{u}(t)$ and the dominating belief of Lax Equivalence Theorem. 

When people talk about future work in a research area, they usually make a list of a few things they have in mind. In our case here, we have a list of things done in the framework of numerical smoothness in the last section. Having mentioned that, the rest of the numerical solutions of evolution equations can be studied for their error estimation in the numerical smoothness framework. A few examples of open topics are listed below.

For ODEs, one topic worth to be studied is the error propagation matrix which was introduced in Section 3.3 and 3.4 for the case study of the Van der Pol equation. This kind of error propagation matrix might work for systems of ODEs in general.

For parabolic equations, the spatial smoothness indicators based on the derivative jumps has bot been successfully used to parabolic equations. This may involve some form of discrete Green's functions to build bridges between $\tilde{u}(t)$ and $u^h(t)$ in local spatial error estimation. Todd Romutis has done some research on this in his PhD dissertation in 2018, which is available in OhioLink. The work is limited to 1-D linear parabolic equations.

For first order hyperbolic problems, research has only been done for 1-D scalar equations. 2-D scalar equations and hyperbolic systems are still to be studied. Second order hyperbolic problems still need to be studied. 

The programming of 2D code to implement front tracking of shocks, contact discontinuities and rarefaction waves are considered as hard to do. Now, one can do programming with AI. It takes researchers to work interactively with LLM to describe what are to be implemented for the specific scheme.

\end{document}